\def\N{\mathbb{N}}
\def\C{\mathbb{C}}
\def\P{\mathbb{P}}
\newtheorem{theorem}{Theorem}[section]
\newtheorem*{theorem*}{Theorem}
\newtheorem{corollary}[theorem]{Corollary}
\newtheorem*{corollary*}{Corollary}
\newtheorem*{maintheorem*}{Main Theorem}
\newtheorem*{fact*}{Fact}
\newtheorem{proposition}[theorem]{Proposition}
\newtheorem{lemma}[theorem]{Lemma}
\newtheorem*{lemma*}{Lemma}
\newtheorem*{observation*}{Theorem of Zariski}
\theoremstyle{definition}
\theoremstyle{definition}
\newtheorem{remark}[theorem]{Remark}
\theoremstyle{definition}
\newtheorem{definition}[theorem]{Definition}
\title{Complete algebraic vector fields on Danielewski surfaces}
\author{Matthias Leuenberger}
\address{Institute of Mathematics\\ University of Bern\\ Sidlerstrasse 5\\ CH-3012 Bern\\ Switzerland}
\email{matthias.leuenberger@bluewin.ch}
\begin{document}
\thanks{The author is supported by SNF Grant 200021-140235/1}
\subjclass[2010]{32M25 (37F75 14R25)}
\keywords{Affine surfaces, complete vector fields, algebraic fibrations}
\begin{abstract}
We give the classification of all complete algebraic vector fields on Danielewski surfaces (smooth surfaces given by $xy=p(z)$). We use the fact that for
each such vector field
there exists a certain fibration that is preserved under its flow. In order to get the explicit list of vector fields a classification of regular function
with general fiber $\C$ or $\C^*$ is required. In this text we present results about such fibrations on Gizatullin surfaces and we give a precise description
of these fibrations for Danielewski surfaces.
\end{abstract}

\maketitle

\section{Introduction}
Complete (= globally integrable) vector fields are vector fields for which a global holomorphic flow map exists. In general the problem of classifying
complete vector fields on Stein manifolds seems to be out of reach. However, for complete algebraic vector fields on affine varieties there are some known
results. In
2000 Anders\'en \cite{andersen} gave a classification of complete algebraic vector fields on $(\C^*)^n$. For affine surfaces the situation looks better.
In 2004 Brunella \cite{Brunella} gave an explicit
classification of complete algebraic vector fields on $\C^2$. The proof uses deep results from the theory of foliations on projective surfaces
developed by Brunella \cite{BrunellaFoliations,BrunellaFoliationsSurvey}, McQuillan \cite{McQuillan}, and others. From this theory it follows that there is 
always a regular function with general fibers isomorphic to $\C$ or $\C^*$ such that the
vector field sends fibers to fibers. Since these functions on $\C^2$ where classified by Suzuki \cite{Suzuki} it was only a small step to conclude the explicit 
form of
the
complete algebraic vector fields on $\C^2$. An extension of this result to affine toric surfaces (a quotient of $\C^2$ by some cyclic group action) has been
recently presented in \cite{kll}. The fact that each complete algebraic vector field preserves the fibers of a regular function with
$\C$ or $\C^*$ fibers turns out to be true on almost all normal affine surfaces. This makes it possible to classify all complete algebraic vector fields for 
other surfaces. 
\begin{fact*}[{\cite[Theorem 1.3]{wfhs}}]
 Let $S$ be a normal affine surface such that not all complete algebraic vector fields on $S$ are proportional, and let $\nu$ be a complete algebraic vector 
field on $S$. Then there exists a regular function $f: S\rightarrow \C$ with
general fiber isomorphic to $\C$ or $\C^*$ such that the flow of $\nu$ sends fibers of $f$ to fibers of $f$ (in short: $\nu$ preserves the fibration $f$).
\end{fact*}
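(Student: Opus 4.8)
The plan is to reproduce for a general normal affine surface the mechanism Brunella used on $\C^2$ \cite{Brunella}: replace $\nu$ by the singular holomorphic foliation it defines, invoke the birational classification of foliated surfaces, and read the fibration off that classification. First I would fix a smooth projective surface $X$ birational to $S$ that contains a resolution of $S$ as a Zariski-open subset, with boundary $D$ a simple normal crossings divisor (the finitely many normal singularities of $S$ being handled on the minimal resolution, onto which $\nu$ lifts). Being algebraic, $\nu$ extends to a rational vector field on $X$ and hence defines a singular holomorphic foliation $\mathcal F$ on $X$; since $\nu$ is tangent to $S$, after finitely many blow-ups every component of $D$ becomes $\mathcal F$-invariant.

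Next I would exploit completeness. A complete algebraic vector field is semicomplete on every invariant Zariski-open set, and the local structure theory of semicomplete vector fields (Rebelo) restricts the reduced singularities of $\mathcal F$; globally it forbids $\mathcal F$ from being of general type, since a foliation of general type is hyperbolic away from finitely many rational curves (McQuillan \cite{McQuillan}, Brunella \cite{BrunellaFoliations}) and a non-zero complete algebraic vector field tangent to such a foliation on the quasi-affine $S$ cannot exist. Hence $\operatorname{kod}(\mathcal F)\in\{-\infty,0,1\}$, and the Enriques--Kodaira-type classification of foliated surfaces (Brunella, McQuillan, Mendes) says that, after a birational modification, $\mathcal F$ either has a rational first integral $g:X\dashrightarrow B$ over a curve, or is a Riccati or turbulent foliation transverse to the generic fibre of a fibration $\pi:X\to B$, or is the orbit foliation of a global vector field on a surface with torsion canonical bundle. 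Affineness enters here: $D$ is non-empty and supports an ample divisor, which kills the torsion-canonical case and lets me arrange the remaining fibration to restrict to an honest regular function $f:S\to\C$ (pushing its poles, respectively the parameter on $B$, into $D$; note that in the transverse case the base is forced to be rational, because a positive-genus base carries no non-trivial complete vector field); in both surviving cases the flow of $\nu$ permutes the fibres of $f$, because $\nu$ is tangent to them when $f$ is a first integral, and projects to a vector field on $B$ in the transverse case.

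It then remains to see that the general fibre $F$ of $f$ is $\C$ or $\C^*$, and this is exactly where the standing hypothesis is needed. If $\nu$ is tangent to the fibres, then $\nu|_F$ is a complete algebraic vector field on the smooth affine curve $F$, not identically zero (else $\nu\equiv0$ on the dense union of generic fibres), so its flow produces a non-constant entire curve in $F$, $F$ is not hyperbolic, and a non-hyperbolic smooth affine curve is $\C$ or $\C^*$. If $\nu$ is transverse to the fibres, rectifying the induced vector field on $B$ near a generic point shows the flow of $\nu$ trivialises the fibration over a punctured neighbourhood, so $F$ carries the monodromy of a flat bundle with structure group $\operatorname{Aut}(F)$; were $F$ hyperbolic this group would be finite, so after a finite \'etale base change $S$ would become a product $B'\times F$ with $B'\cong\C$ or $\C^*$, and a Kobayashi-metric argument shows every complete algebraic vector field on such a product is tangent to the non-hyperbolic fibres — so all complete algebraic vector fields on $S$ would be proportional, contrary to hypothesis. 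Thus $F$ is not hyperbolic and hence is $\C$ or $\C^*$, which is the claim. I expect the main obstacle to be the classification step: extracting the precise list of admissible foliated pairs and, inside each, producing the fibration as a genuine regular function $S\to\C$ with controlled pole behaviour along $D$ — disentangling the low-Kodaira-dimension cases with multiple fibres or competing invariant fibrations, and keeping the boundary under control, is where the normality and affineness of $S$ must be used with real care.
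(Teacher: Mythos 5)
This Fact is not proved in the paper at all: it is imported verbatim from \cite[Theorem 1.3]{wfhs} and used as a black box, so there is no in-paper argument to measure your proposal against. Your outline does reconstruct the strategy of the cited source and of its prototype on $\C^2$ \cite{Brunella}: replace $\nu$ by the foliation it induces on an SNC-compactification, use (semi)completeness together with the Brunella--McQuillan classification of foliated surfaces \cite{BrunellaFoliations,McQuillan} to exclude general type and the torsion-canonical case, extract a rational fibration that $\nu$ is either tangent to or transverse to, and use non-hyperbolicity of the general fibre to force it to be $\C$ or $\C^*$. Two caveats keep this a programme rather than a proof. First, the step you yourself identify as the main obstacle --- converting the foliated classification into an honest regular function $f:S\rightarrow\C$ with controlled poles along the boundary, in the presence of multiple fibres and of the singularities of $S$ --- is where the cited paper does most of its work, and your sketch does not supply it. Second, your deployment of the hypothesis that not all complete algebraic vector fields are proportional (finite automorphism group of a hyperbolic fibre, \'etale base change to a product, Kobayashi-metric argument) is the one place where the theorem could actually fail without that hypothesis, and it is disposed of in a few unsubstantiated lines; this case needs the most care, not the least. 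Neither caveat indicates a wrong turn --- the route is the expected one --- but as written the argument is an outline of \cite{wfhs}, not a replacement for it.
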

This fact shows that once the classification of $\C$- and $\C^*$-fibrations is done the complete vector fields are described. In this text we give some
results
about $\C$- and $\C^*$-fibrations on Gizatullin surfaces. For the special case of smooth surfaces given by $xy=p(z)$ (which are called Danielewski surfaces) we
can provide a precise classification: Here we give the list of complete algebraic vector fields. Since the $\C$- and $\C^*$-polynomials on Danielewski surfaces
look much alike the ones on $\C^2$ the vector fields also look similarly. Surprisingly if $\deg(p)=4$ there occurs a complete vector field that has no analogue
on $\C^2$.

Section 2 is a recapitulation of the definition of Gizatullin surfaces, a generalization of Danielewski surfaces, and SNC-completions, a
powerful tool for affine algebraic surfaces. In section 3 we present some results about $\C$- and $\C^*$-fibrations on Gizatullin surfaces which will be used in
section 4 to give an explicit description of $\C$- and $\C^*$-fibrations on Danielewski surfaces. Section 5 combines this description to a proof of the
following theorem:

\begin{maintheorem*}
 Let $\nu$ be a complete algebraic vector field on $S=\lbrace xy=p(z)\rbrace$ (where $p$ has simple zeros), and let the hyperbolic vector field (HF) and the two
shear vector fields (SF) be defined as follows: 
$$\mathrm{HF}
= x\frac{\partial}{\partial x} - y\frac{\partial}{\partial y}, \quad\mathrm{SF}^x = p'(z)\frac{\partial}{\partial y} +
x\frac{\partial}{\partial z}\quad  \mathrm{and} \quad \mathrm{SF}^y = p'(z)\frac{\partial}{\partial x} +
y\frac{\partial}{\partial z}. $$Then $\nu$ occurs in the following list  (up to an automorphism of $S$):

(1) $\nu$ preserves the polynomial $x$ and is of the form:
\[\nu = c\mathrm{HF} + \left( A(x)z + B(x) \right)\mathrm{SF}^x\]
for some $c\in\C$ and $A,B\in\C[x]$.

(2) $\nu$ preserves a polynomial $x^m(x^l(z+a)+Q(x))^n$ for coprime numbers $m,n\in\N$, $\deg(Q)\in\N_0$, $a\in\C$ and $\deg(Q) < l$ and is of the form:
\begin{eqnarray*} \nu &=& c\left( \frac{z+a}{x} + \frac{Q(x)}{x^{l+1}}\right)\mathrm{SF}^x +
A(x^m(x^l(z+a)+Q(x))^n) \\ &&\cdot\left[n\mathrm{HF}-\left(\frac{(m+nl)(z+a)}{x} + \frac{mQ(x)+nxQ'(x)}{x^{l+1}}\right)\mathrm{SF}^x\right]\end{eqnarray*}
for some $c\in\C$ and $A\in\C[t]$ satisfying $A(0)=c/(m+nl)$ and $A(x^m(x^l(z+a)+Q(x))^n)(mQ(x)+nxQ'(x))-cQ(x) \in x^{l+1}\cdot\C[S]$.

(3) If $\deg(p)=4$ then $\nu$ can also preserve the polynomial $ ax + y + \frac{1}{6}p''(z)$ where $a$ is the leading coefficient of $p$. In this case $\nu$
looks
like:
\[
 \nu = A\left(ax + y +
\frac{1}{6}p''(z)\right)\left(-\frac{1}{6}p'''(z)\mathrm{HF} + a\mathrm{SF}^x -\mathrm{SF}^y\right)\]
for some $A\in\C[t]$.
\end{maintheorem*}

The Main Theorem describes a class of one-parameter subgroups of the group of holomorphic automorphisms on $S=\lbrace xy=p(z)\rbrace$. It is worth to compare 
this result to well known results in the algebraic case. Daigle \cite{Daigle} and Makar-Limanov \cite{ML} showed that on $S$ every 
algebraic $\C^+$-action is up to an algebraic automorphisms induced by some vector field $f(x)\mathrm{SF}^x$ for some polynomial $f\in\C[x]$, and thus is a 
special case of (1) of the Main Theorem. Moreover, by \cite{FKZ} there is a unique (up to an automorphism) algebraic $\C^*$-action on $S$ which is induced by 
$\mathrm{HF}$, which can be seen as a vector field of type (1) or (2). 

\textbf{Acknowledgements:} I thank Shulim Kaliman for introducing me into this interesting topic and for his helpful comments on this article. Additionally, I 
thank the referee for the carefully done report and the numerous style remarks.

\section{Gizatullin surfaces and their completions}
\subsection{SNC-completions and dual graphs}
It is a well established procedure in affine algebraic geometry to use so called SNC-completions of affine surfaces. 
Let $S$ be an affine surface, and let $X
\supset S$ be a projective surface such that the \textit{boundary divisor} $D=X\setminus S=C_1\cup\ldots\cup C_k$ is contained in the
smooth locus of $X$. If moreover the curves $C_i$ are smooth and intersect pairwise transversally and at most in double points of $D$ then we say that $X$ is a
completion of $S$ with \textit{simple normal crossings} (in short: \textit{SNC-completion}). Every normal affine surface admits an SNC-completion. In this text 
$D=X\setminus S$ 
will
always be a union of rational curves.

A good reference for SNC-completions is for example \cite{FKZCompletions}. In particular, in this reference most notions that are used in this section are 
introduced. However the concept of SNC-completions was already used much earlier by Danilov and Gizatullin. Let $X$ be an SNC-completion of an affine surface 
$S$ then its \textit{dual graph} $\Gamma_X$ is given as follows: The vertices of $\Gamma_X$ are given by the
irreducible components $C_i$ of the boundary $D=X\setminus S$ and each intersection point $p\in C_i\cap C_j$ of two different components corresponds to an edge
of $\Gamma_X$ that connects the vertices which correspond to $C_i$ and $C_j$. The graph $\Gamma_X$ is often considered as a weighted graph where the weight of
a vertex is given by the self-intersection $C_i\cdot C_i$ of its corresponding curve $C_i$.

Clearly neither SNC-completions nor dual graphs are unique: Modifications along the boundary will change the boundary and the dual graph of the boundary. The
following two modifications (and its inverses) are possible ($C$ is the name of the vertex and $\omega=C\cdot C$ is its weight):

\vspace{0.5 cm}
\begin{tikzpicture}
 \node {Outer blow up:} node[below] {(of a point on a curve $C$)};
\node at (2.5,0) {$\Gamma=$};
 \draw (3,1)--(4,0)--(3,-1);
 \draw (3,0.75)--(4,0);
\draw[fill] (4,0) circle (0.05) node[above right] {$\omega$} node[below right] at (4,0) {$C$};
\draw[dotted, thick] (3.1,0.4)--(3.1,-0.7);
\node at (5.2,0.1) {$\stackrel{(O)}{\rightsquigarrow}$ $\tilde\Gamma=$};
\draw (6,1)--(7,0)--(6,-1);
 \draw (6,0.75)--(7,0);
\draw[fill] (7,0) circle (0.05) node[above right] {$\omega-1$} node[below right] at (7,0) {$\hat C$};
\draw[dotted, thick] (6.1,0.4)--(6.1,-0.7);
\draw (7,0)--(9,0);
\draw[fill] (9,0) circle (0.05) node[above] {$-1$} node [below] at (9,0) {$E$};
\end{tikzpicture}

\vspace{0.5 cm}
\begin{tikzpicture}
 \node {Inner blow up:} node[below] {(of a point on $C_1\cap C_2$)};
\node at (2.5,0) {$\Gamma=$};
 \draw (3,1.5)--(4,0.5)--(4,-0.5)--(3,-1.5);
 \draw (3,1.25)--(4,0.5);
 \draw (3,-1.25)--(4,-0.5);
\draw[fill] (4,0.5) circle (0.05) node[left] {$C_1$} node[above right] at (4,0.5) {$\omega_1$};
\draw[fill] (4,-0.5) circle (0.05) node[left] {$C_2$} node[below right] at (4,-0.5) {$\omega_2$};
\draw[dotted, thick] (3.1,0.9)--(3.1,-0.9);
\node at (5.2,0.1) {$\quad\quad\stackrel{(I)}{\rightsquigarrow}$ $\quad\quad \tilde\Gamma=$};
 \draw (7,1.5)--(8,0.5)--(8.5,0)--(8,-0.5)--(7,-1.5);
 \draw (7,1.25)--(8,0.5);
 \draw (7,-1.25)--(8,-0.5);
\draw[fill] (8,0.5) circle (0.05) node[left] {$\hat C_1$} node[above right] at (8,0.5) {$\omega_1-1$};
\draw[fill] (8,-0.5) circle (0.05) node[left] {$\hat C_2$} node[below right] at (8,-0.5) {$\omega_2-1$};
\draw[fill] (8.5,0) circle (0.05) node[left] {$E$} node[right] at (8.5,0) {$-1$};
\draw[dotted, thick] (7.1,0.9)--(7.1,-0.9);
\end{tikzpicture}
\vspace{0.5 cm}
\newline where $E$ is the exceptional divisor and $\hat C$ denotes the strict transform of a curve $C$. A sequence of $(I)$, $(I^{-1})$, $(O)$ and $(O^{-1})$
starting
with a weighted graph is called a modification of weighted graphs.
A birational map $\varphi: X \dashrightarrow Y$ between two completions $X,Y$ of an affine surface $S$ such that $\varphi\vert_S$ induces an
isomorphism on $S$ is called a birational modification of completions and an isomorphism of completions if $\varphi$ is additionally an isomorphism. By
a classical theorem of Zariski any birational map can be seen as composition of blow ups followed by a composition of blow downs. Hence we get the following
statement:
\begin{observation*}
(1) Let $S$ be an affine surface and let $X$ and $Y$ be two SNC-completions. Then there exists a SNC-completion $Z$ of $S$ obtained via a
sequence of blow ups performed over the boundaries of $S$ in $X$ and $Y$, respectively. Hence $\Gamma_Z$ is obtained by modifications as above from both 
$\Gamma_X$ and $\Gamma_Y$.

(2) Let $\gamma: \Gamma_X \rightsquigarrow \Gamma$ be a modification of weighted graphs. Then there is a completion $Y$ of $S$ such that $\Gamma_Y=\Gamma$ and
$Y$ is obtained
from $X$ by a birational map $\phi: X\dashrightarrow  Y$ that induces the modification $\gamma$ on the dual graphs. If $\gamma$ does not contain outer blow ups
then $\phi$ is uniquely determined.
\end{observation*}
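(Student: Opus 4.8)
The plan is to reduce both statements to two classical facts about smooth projective surfaces — Zariski's factorization of a birational map into a chain of point blow-ups followed by a chain of point blow-downs, and Castelnuovo's contractibility criterion — together with the elementary observation that blowing up a point lying on an SNC divisor on a smooth surface again produces an SNC divisor, the new local picture being either $\hat C\cup E$ at a smooth point of the divisor or the transversal chain $\hat C_1\cup E\cup\hat C_2$ at a double point; such a blow-up is therefore exactly an outer or an inner blow-up.

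For part (1) I would consider the birational map $\varphi\colon X\dashrightarrow Y$ extending the identity on $S$, and let $Z$ be a resolution of singularities of the closure in $X\times Y$ of its graph, taken to be an isomorphism over the part lying above $S$. The two projections give birational morphisms $\pi_X\colon Z\to X$ and $\pi_Y\colon Z\to Y$, each by Zariski a composition of point blow-ups. Over $S$ the graph of $\varphi$ is the diagonal, so $\pi_X$ and $\pi_Y$ restrict to isomorphisms onto $S$; consequently every blow-up centre occurring in $\pi_X$ lies over $D_X=X\setminus S$ and every one in $\pi_Y$ lies over $D_Y=Y\setminus S$. Thus $Z$ contains $S$ with $Z\setminus S=\pi_X^{-1}(D_X)$, so $Z$ is a completion of $S$; and since $X$ is SNC and we only ever blow up points of the (reduced) total transform of the boundary, which stays SNC, $Z$ is an SNC-completion. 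Each of these blow-ups is an outer or an inner blow-up, so $\Gamma_Z$ is obtained both from $\Gamma_X$ and from $\Gamma_Y$ by modifications of the required kind.

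For existence in part (2) I would induct on the length of $\gamma$ and realise one elementary step at a time. A step $(I)$ at an edge $C_1\cap C_2$ is realised by blowing up the unique point $C_1\cap C_2$, with $\phi$ the inverse of that blow-down. A step $(O)$ on a curve $C$ is realised by blowing up any point of $C$ that is not a double point of the boundary — one exists since $C\cong\mathbb P^1$ carries only finitely many double points — and here the choice, hence $\phi$, is not unique. A step $(I^{-1})$ (respectively $(O^{-1})$) concerns a boundary curve $E\cong\mathbb P^1$ with $E\cdot E=-1$ meeting the rest of the boundary in two transversal points on two distinct components (respectively in a single transversal point on one component); by Castelnuovo $E$ is contracted by a morphism $c\colon X\to Y$, which is an isomorphism over $S$ because $E\subset D_X$. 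One then checks that $Y$ is again an SNC-completion: locally $c$ is the blow-down of a smooth point, the images of the one or two curves meeting $E$ are smooth at $c(E)$ since they meet $E$ with multiplicity one, and in the $(I^{-1})$ case their two tangent directions at $c(E)$ correspond to the two distinct points of $E$ and are hence transversal. Composing the elementary realisations gives $Y$ and $\phi$.

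For the uniqueness clause, when $\gamma$ has no outer blow-up every elementary step is $(I)$, $(I^{-1})$ or $(O^{-1})$; I would check inductively that each such step has a forced realisation — the next completion is determined up to a unique isomorphism, and the step is the blow-up of \emph{the} point, or the Castelnuovo contraction of \emph{the} curve, prescribed by the graph — so the composite $\phi$ is uniquely determined. The work here is bookkeeping rather than conceptual: the two points that need genuine care are, in part (1), that $\varphi$ is an isomorphism \emph{precisely} over $S$, which is exactly what forces all blow-up centres onto the boundary, and, in part (2), that each Castelnuovo contraction keeps the complement equal to $S$ and keeps the boundary SNC — the transversality of the two branches created by an $(I^{-1})$ step being the one verification that is not completely automatic.
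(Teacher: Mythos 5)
Your argument is correct and follows the same route the paper takes: the paper derives the Observation in one line from Zariski's factorization of a birational map into blow-ups followed by blow-downs, and your proposal simply fleshes out that derivation (common resolution dominating $X$ and $Y$ with all centres forced onto the boundary since the map is the identity on $S$, plus the check that blow-ups at boundary points stay SNC and are exactly the $(O)$/$(I)$ moves, and Castelnuovo contractions for the inverse moves). The uniqueness clause is handled as the paper intends: only the $(O)$ step involves a choice of centre, so a modification without outer blow-ups forces $\phi$.
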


A completion $X$ will be called minimal if $\Gamma_X$ does not have a (-1)-vertex of degree $\leq 2$. 

\subsection{Gizatullin surfaces}
A \textit{Gizatullin surface} is a normal affine surface $S$ that admits an SNC-completion $X$ such that the graph $\Gamma_X$ is linear. For such a completion 
(also
called a \textit{zigzag}) with
\begin{center}
\begin{tikzpicture}
\node[left] at (0,0) {$\Gamma_X = \quad\quad$};
\draw (0,0)--(2,0);
\draw[dotted] (2,0)--(3,0);
\draw (3,0)--(4,0);
\draw[fill] (0,0) circle(0.05) node[above]{$\omega_0$} node[below]{$C_0$};
\draw[fill] (1,0) circle(0.05) node[above]{$\omega_1$} node[below]{$C_1$};
\draw[fill] (4,0) circle(0.05) node[above]{$\omega_k$} node[below]{$C_k$};
\end{tikzpicture}
\end{center}
we use the notation 
$$\Gamma_X= [[\omega_0,\omega_1, \ldots,\omega_k]].$$
A completion $X$ is called \textit{standard} if 
$$\Gamma_X=[[0,0,\omega_2,\ldots,\omega_k]] \quad\mathrm{or}\quad \Gamma_X=[[0,0,0]] \quad \mathrm{or}\quad \Gamma_X=[[0,0]]$$
and $\omega_1$-\textit{semistandard} if 
$$\Gamma_X=[[0,\omega_1,\omega_2,\ldots,\omega_k]] \quad\mathrm{or}\quad \Gamma_X=[[0,\omega_1,0]] \quad \mathrm{or}\quad \Gamma_X=[[0,\omega_1]]$$
with $\omega_i\leq -2$ for all $2\leq i \leq k$.

Now we introduce two modifications of the boundary of Gizatullin surfaces. The first one is
\begin{equation}\label{makezero}[[0,\omega_1,\ldots]] \stackrel{(O)}{\rightsquigarrow}
[[-1,-1,\omega_1,\ldots]] \stackrel{(I^{-1})}{\rightsquigarrow}
[[0,\omega_1 +1,\ldots]]\end{equation}
that allows to transform any semistandard completion $X$ with $\Gamma_X = [[0,\omega_1,\allowbreak \omega_2,\ldots]]$ into a standard completion $Y$ with
$\Gamma_Y = [[0,0,\omega_2,\ldots]]$. The
second modification is a way to use a zero vertex in order to move weight from one side of the vertex to the other:
\begin{equation}\label{movezeros}
[[\ldots,\omega_{i-1},0,\omega_{i+1}\ldots]]\stackrel{(I)}{\rightsquigarrow}
[[\ldots[\omega_{i-1},-1,-1,\omega_{i+1}-1,\ldots]]
\end{equation}
$$\stackrel{(I^{-1})}{\rightsquigarrow} [[\ldots,\omega_{i-1}+1,0,\omega_{i+1}-1,\ldots]]$$
By a sequence of modification of type (\ref{movezeros}) it is possible to move zeros vertices through the boundary divisor:
$$ [[0,0,\omega_2,\ldots,\omega_k]] \rightsquigarrow [[\omega_2,0,0,\ldots,\omega_k]] \rightsquigarrow \cdots \rightsquigarrow
[[\omega_2,\ldots,\omega_k,0,0]].$$
The modification above is called \textit{reversion} and it shows that the data of a standard completion is in general not unique. Using these modifications we 
see that each Gizatullin surface admits a standard completion and that all minimal completions have a linear dual graph:
\begin{proposition}[\cite{FKZCompletions}] \label{standardcompletion}
 Let $S$ be a Gizatullin surface. Then: 

(1) There exists a standard completion $X$, and the dual graph $\Gamma_X$ is unique up to reversion.

(2) For any completions $X$ there is a contraction (i.e. a modification consisting of ($O^{-1}$) and ($I^{-1}$)) of $\Gamma_X$ to a linear graph. 
\end{proposition}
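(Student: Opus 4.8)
The plan is to derive both statements from the Observation, the two explicit moves (\ref{makezero}) and (\ref{movezeros}), the affineness of $S$, and the structure theory of zigzags in \cite{FKZCompletions}; I would prove (2) first and use it for (1).

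For (2), let $X$ be an arbitrary SNC-completion of $S$ and, using the definition of a Gizatullin surface, fix one SNC-completion $X_0$ with $\Gamma_{X_0}$ linear. By part (1) of the Observation there is a completion $Z$ obtained by blowing up points on the boundaries of $X$ and of $X_0$; in particular $\Gamma_Z$ is obtained from the linear graph $\Gamma_{X_0}$ by a sequence of inner and outer blow ups. The first step is the elementary remark that any weighted graph obtained from a linear graph by blow ups admits a contraction back to a linear graph: one inducts on the number of blow ups, noting that the last one produced either a $(-1)$-vertex of degree $1$ (outer case) or a $(-1)$-vertex of degree $2$ (inner case), and contracting it decreases the number of blow ups over $\Gamma_{X_0}$ by one. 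Hence $\Gamma_Z$ contracts to a linear graph. Since $\Gamma_Z$ also contracts to $\Gamma_X$ (contract the exceptional locus of $Z\to X$), it then remains to compare the two contractions of $\Gamma_Z$ and conclude that a contraction of $\Gamma_X$ is linear too; the natural route is to contract $\Gamma_X$ and $\Gamma_{X_0}$ separately to minimal graphs and invoke the fact, from \cite{FKZCompletions}, that minimal completions of $S$ are joined by elementary transformations through zero vertices, which keep one in the linear case.

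For the existence part of (1), apply (2) to reduce $X$ to a completion whose dual graph is a minimal linear graph $[[\omega_0,\ldots,\omega_k]]$; since a $(-1)$-vertex of a linear graph has degree $\leq 2$, minimality forces $\omega_i\neq -1$ for all $i$. Because $S=X\setminus D$ is affine, $D$ supports an effective ample divisor, so the intersection form of $D$ represents a positive value and is not negative definite; as a linear graph with all weights $\leq -2$ is negative definite, some $\omega_i\geq 0$. The crux is then to show, using reversion and repeated applications of (\ref{movezeros}), that the non-negative weights can be collected at one end so as to reach an $\omega_1$-semistandard graph $[[0,\omega_1,\omega_2,\ldots,\omega_k]]$ with $\omega_j\leq -2$ for $j\geq 2$; once this is achieved, iterating (\ref{makezero}) (or its inverse) replaces $\omega_1$ by $0$ and produces a standard graph, the degenerate outputs being $[[0,0,0]]$ and $[[0,0]]$. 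For uniqueness up to reversion, given two standard completions $X$ and $Y$ one again passes to a common dominating completion $Z$ and tracks the blow ups $\Gamma_X\rightsquigarrow\Gamma_Z\rightsquigarrow\Gamma_Y$ together with the induced contractions; since standard graphs are minimal, the only surviving ambiguity is the direction in which weight was shifted across the two leading zero vertices, which is exactly a reversion.

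The main obstacle is the comparison of contraction sequences at the end of the proof of (2), and the parallel bookkeeping in the uniqueness statement: contraction of $(-1)$-vertices is not confluent for arbitrary weighted graphs, so one cannot simply assert that the two contractions of $\Gamma_Z$ meet, nor that the reduction to $\omega_1$-semistandard form always terminates. This is precisely where the work of \cite{FKZCompletions} lies — analysing how elementary transformations act on zigzags and pinning the standard form down up to reversion. By contrast, the induction on the number of blow ups, the negative-definiteness criterion, and the explicit normalization via (\ref{makezero}) and (\ref{movezeros}) are routine once that structure is available.
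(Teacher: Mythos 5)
The paper gives no proof of this proposition: it is imported wholesale from \cite{FKZCompletions}, preceded only by the informal remarks about the moves (\ref{makezero}) and (\ref{movezeros}), so there is no in-paper argument to measure yours against. Judged on its own terms, your outline is structurally reasonable and correctly locates where the substance lies, but it is an outline rather than a proof: the two points at which you defer to \cite{FKZCompletions} are precisely the content of the statement. In (2), knowing that $\Gamma_Z$ contracts both to $\Gamma_X$ and to the linear graph $\Gamma_{X_0}$ does not by itself produce a contraction of $\Gamma_X$ to a linear graph; what is needed is the comparison theory of contractions (equivalently, that a minimal weighted graph birationally dominated by a linear one is again linear), and that is the main theorem of the zigzag calculus you are citing. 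In (1), the normalization step is circular as written: the move (\ref{movezeros}) presupposes a $0$-vertex, so producing the first $0$-vertex from a merely non-negative-definite minimal chain, and showing that the "collect the non-negative weights at one end" procedure terminates in a semistandard form, is itself part of the work. Likewise, the uniqueness claim needs more than "the only surviving ambiguity is a reversion"; one must analyse how all elementary transformations act on standard zigzags, which is again the cited material.

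Two smaller inaccuracies. Affineness of $S$ does not directly give an effective \emph{ample} divisor supported on $D$ on the given completion (by Goodman's criterion one may first have to blow up on $D$); the correct and sufficient statement is that the intersection form on the components of $D$ is not negative definite, which follows by intersecting the polar divisor of a nonconstant regular function on $S$ with itself. And in the induction for (2), contracting the last exceptional $(-1)$-vertex shows that $\Gamma_Z$ contracts back to $\Gamma_{X_0}$, which is fine, but note this only gives \emph{one} linear contraction of $\Gamma_Z$, not yet anything about $\Gamma_X$. The routine ingredients — Zariski domination, the induction on the number of blow-ups, non-negative-definiteness — are all correct; the proposal simply outsources the decisive lemmas to the same reference the paper cites, which is consistent with the paper's treatment but does not constitute an independent proof.
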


\section{$\C$- and $\C^*$-fibrations on Gizatullin surfaces}
Let us start with some well know facts in algebraic geometry.
\begin{proposition}\label{fibrations}
\begin{enumerate}[(a)]
 \item \cite{BookComplexSurfaces} Let $S$ be a normal affine surface, and let $f: S \rightarrow \C$ be a reduced\footnote{Recall that a regular function $f: 
S\rightarrow \C$ is called reduced if its general fiber is connected.} regular function with rational fibers.
Then there is a pseudo-minimal
SNC-completion $X$ such that $f$ extends to a regular function $\bar f: X \rightarrow \P^1$ with general fibers isomorphic to $\P^1$.
\item \cite{Suzuki} Let $f$ be as in (a) then $\chi(S)=\chi(F\times \C) + \sum (\chi(F')- \chi(F))$ where $\chi$ denotes the Euler characteristic, $F$ is
a regular fiber of
$f$ and the sum is taken over all singular fibers $F'$.
\item \cite{BookComplexSurfaces} Let $X$ be a smooth projective surface and let $f: X \rightarrow \P^1$ a regular function with general fiber isomorphic to
$\P^1$. Then there is a
sequence of contractions $\pi: X \rightarrow Y$ and a map $f': Y \rightarrow \P^1$ such that $f=f'\circ\pi$ and $f'$ is a $\P^1$-bundle.
\item \cite{FKZ} Let $C\cong \P^1$ be a curve on a rational projective surface $X$ with $C\cdot C = 0$ then there is a regular function $f: X \rightarrow \P^1$
such that
$C = f^{-1}(\infty)$ is a regular fiber of $f$.
\end{enumerate}
\end{proposition}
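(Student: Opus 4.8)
The plan is to deduce each of the four items from classical surface theory, the substance being carried out in the cited references; I only sketch the shape of the arguments. For (a) I would start from an arbitrary SNC-completion $X_0$ of $S$ and resolve the finitely many points of indeterminacy of the rational map $X_0\dashrightarrow\P^1$ induced by $f$ — all of them over the boundary — to obtain a regular extension $\bar f\colon X_1\to\P^1$ of $f$. Since $f$ is reduced its general fibre on $S$ is connected, hence so is the general fibre of $\bar f$; generic smoothness (we are in characteristic $0$) makes that fibre smooth, and it is rational by hypothesis, so it is isomorphic to $\P^1$. Contracting those $(-1)$-curves inside fibres that can be blown down without destroying the SNC property then yields a pseudo-minimal completion $X$, which is precisely the construction of \cite{BookComplexSurfaces}.

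Item (c) is the purely projective version of that last contraction. Given a $\P^1$-fibration $f\colon X\to\P^1$ on a smooth projective $X$ I would repeatedly contract $(-1)$-curves contained in fibres; the process terminates because each contraction lowers the Picard number, and at the terminal model every fibre is irreducible: a reducible fibre, being connected of arithmetic genus $0$ with all components of negative self-intersection, must by Zariski's lemma and adjunction contain a $(-1)$-curve, contradicting minimality. Hence the resulting $f'\colon Y\to\P^1$ is a $\P^1$-bundle, i.e. a Hirzebruch surface; this is again the content of \cite{BookComplexSurfaces}.

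For (b), following \cite{Suzuki}, I would compactify $f$ to $\bar f\colon X\to\P^1$ as in (a), apply Ehresmann's theorem over the complement of the finite discriminant of $\bar f$, and conclude that $f$ itself restricts to a topological fibre bundle over a Zariski-open $U=\C\setminus\{c_1,\dots,c_s\}$ containing every non-generic value, with fibre $F$; the fibres $F'_1,\dots,F'_s$ over $c_1,\dots,c_s$ are then exactly the singular fibres. Additivity of the Euler characteristic for the decomposition $S=f^{-1}(U)\sqcup\bigsqcup_i F'_i$ together with its multiplicativity on fibre bundles gives
\[
\chi(S)=\chi(U)\,\chi(F)+\sum_{i=1}^{s}\chi(F'_i)=(1-s)\,\chi(F)+\sum_{i=1}^{s}\chi(F'_i),
\]
which, since $\chi(F\times\C)=\chi(F)$, is exactly the asserted $\chi(S)=\chi(F\times\C)+\sum_i\bigl(\chi(F'_i)-\chi(F)\bigr)$. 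The step I expect to be the genuine obstacle — indeed the only non-formal point in the whole proposition — is the fibre-bundle structure over $U$: Ehresmann's theorem does not apply directly to the non-proper map $f$, so one must really use the structure theory of regular functions on affine surfaces, which is what \cite{Suzuki} supplies.

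For (d) I would invoke Riemann--Roch and Serre duality on $X$. Adjunction gives $C\cdot K_X=-2-C^2=-2$, hence
\[
\chi\bigl(\mathcal{O}_X(C)\bigr)=\chi(\mathcal{O}_X)+\tfrac12\bigl(C^2-C\cdot K_X\bigr)=\chi(\mathcal{O}_X)+1=2,
\]
using $\chi(\mathcal{O}_X)=1$ for a rational surface, while $h^2(\mathcal{O}_X(C))=h^0(K_X-C)\le h^0(K_X)=0$; thus $h^0(\mathcal{O}_X(C))\ge2$ and $|C|$ is a pencil. Since $C$ is irreducible it cannot be a fixed component, so the pencil has no fixed part, and two general members are effective divisors meeting in $C^2=0$ points, hence are disjoint; therefore $|C|$ is base-point free and defines a morphism $g\colon X\to\P^1$ having $C$ as one of its fibres. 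By Bertini and adjunction the general fibre is a smooth curve of arithmetic genus $0$, so isomorphic to $\P^1$; taking $C=g^{-1}(\infty)$ and $f=g$ gives the claim, which is the argument of \cite{FKZ}.
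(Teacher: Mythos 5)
The paper itself offers no proof of this proposition: it is presented as a collection of ``well known facts'' and each item is simply attributed to \cite{BookComplexSurfaces}, \cite{Suzuki} and \cite{FKZ}. Your sketches supply the standard arguments behind those citations and are essentially correct: (a) resolving the indeterminacy of the induced map on an SNC-completion and then contracting superfluous $(-1)$-curves in fibres is exactly how such an extension is produced, and reducedness plus generic smoothness in characteristic $0$ does give $\P^1$ as general fibre; (c) is the classical relative minimal model argument for $\P^1$-fibrations via Zariski's lemma and adjunction; (b) follows from additivity and multiplicativity of $\chi$ once one knows $f$ is a topological fibre bundle over a Zariski-open set of the base, and you are right that this local triviality (Verdier/Thom for the non-proper $f$, or Suzuki's structure theory) is the only non-formal input; (d) is the standard Riemann--Roch computation. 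Two places deserve a word more than you give them. In (d) you should justify that the general member of the pencil is connected and reduced before applying adjunction: a Stein factorization $f=g\circ h$ together with the fact that $C=f^{-1}(\infty)$ is irreducible and reduced forces $\deg g=1$, and then the general fibre is reduced and connected of arithmetic genus $0$. Also, the statement allows $X$ to be a possibly singular rational projective surface, whereas your Riemann--Roch and adjunction computations presuppose smoothness; one should either pass to a resolution or note that in all applications in the paper $C$ lies in the smooth locus. Neither point is a real obstruction, and your route is the natural one for a result the paper leaves entirely to the literature.
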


\begin{definition}
If a regular function $f:S\rightarrow \C$ (or $\P^1$) on a variety $S$ is considered as \textit{fibration} it means that we are only interested in its level 
sets (i.e.
the fibers). In particular, two regular functions are considered to be the same fibrations whenever the differ only by a M\"obius transform in the target. A
fibration $f$ is said to be a $\C$- (resp. $\C^*$- or $\P^1$-) fibration if its regular fiber is isomorphic to $\C$ (resp. $\C^*$ or $\P^1$).
\end{definition}

Let $S$ be a Gizatullin surface, and let $X$ be a standard completion with boundary $D=X\setminus S= C_0 \cup \ldots \cup C_k$. Then the two curves $C_0$
and $C_1$ induce (Proposition \ref{fibrations}(d)) both a regular function $\phi_0,\phi_1: X \rightarrow \P^1$ such that $C_0 = \phi_0^{-1}(\infty)$ and $C_1 =
\phi_1^{-1}(\infty)$ are regular
fibers. The function $\phi_0$ (resp. $\phi_1$) is constant on $C_i$ for $2\leq i \leq k$ (resp. $3\leq i \leq k$), and we may assume that it is vanishing there.
Moreover $\phi_0$ (resp. $\phi_1$) restricted to $C_1$ (resp. $C_0$ and $C_2$) is an isomorphism.
 \begin{center}
  \begin{tikzpicture}[scale=0.8]
   \draw (-0.2,0)--(2.2,0) node[above] at (1,0) {$0$} node[below left] at (1,0) {$C_1$};
\draw (0,0.2)--(0,-2.2) node[left] at (0,-1) {$0$} node[right]{$C_0$};
\draw (2,0.2)--(2,-2.2) node[right] at (2,-1) {$\omega_2$} node[left] at (2,-1) {$C_2$};
\draw (1.8,-2) arc (90:30:0.7) node[above] {$\omega_3$};
\draw[dotted] (2.4,-2.3) arc (90:30:0.7);
\draw (3,-2.6) arc (90:30:0.7) node[above] {$\omega_k$};
\draw[->] (3,-1)--(4,-1) node[above] at (3.5,-1){$\phi_1$};
\draw[->] (1,-2.3)--(1,-2.8) node[right] at (1,-2.6){$\phi_0$};
\draw (-0.2,-3)--(2.2,-3) node[below] at (1,-3){$\P^1$};
\draw[fill] (0,-3) circle(0.05) node[below left]{$\phi_0(C_0)=\infty$};
\draw[fill] (2,-3) circle(0.05) node[below right]{$0=\phi_0(C_2\cup\ldots\cup C_k)$};
\draw (5,0.2)--(5,-2.2) node[left] at (5,-1) {$\P^1$};
\draw[fill] (5,0) circle (0.05) node[right] {$\infty = \phi_1(C_1)$};
\draw[fill] (5,-2) circle (0.05) node[right] {$0 = \phi_1(C_3\cup\ldots\cup C_k)$};
  \end{tikzpicture}
 \end{center}
Hence the map $\phi = \phi_0 \times \phi_1: X \rightarrow
\P_x^1\times\P_y^1$ induces isomorphisms $\phi\vert_{C_0}: C_0 \rightarrow \lbrace x=\infty\rbrace$, $\phi\vert_{C_1}: C_1 \rightarrow \lbrace y=\infty\rbrace$
and $\phi\vert_{C_2} \rightarrow \lbrace x = 0 \rbrace$, and moreover $\phi$ contracts the curves $C_3, \ldots, C_k$ onto $(0,0)$. Altogether the
map $\phi$ describes a way how to construct a Gizatullin surface starting with $\C^2$ and blowing up points on $\lbrace x = 0 \rbrace$. The exceptional divisor
of $\phi$ consists of the curves $C_3,\ldots,C_k$ and additional curves (called feathers) $F_1,\ldots, F_n$ that intersect the surface $S$. By Proposition
\ref{fibrations}(b) the number of feathers is precisely $\chi(S)$.

Now, we are able to state some results about rational fibrations on Gizatullin surfaces. Propositions \ref{c-fib}, \ref{cs-fib} and \ref{cs-fib-ds} are
specializations of Proposition 6.6 in \cite{wfhs}. In order to be self-contained we still present complete proofs. Let us start with $\C$-fibrations.

\begin{proposition}[\cite{FKZ}] \label{c-fib}
 Let $f: S \rightarrow \C$ be a $\C$-fibration on a Gizatullin surface $S$. Then there is a standard completion $X$ such that $f$ coincides with the fibration
$\phi_0$ given as above.
\end{proposition}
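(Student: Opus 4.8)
The plan is to extend $f$ to a $\P^1$-fibration on a completion, to recognise the fibre over $\infty$ as a boundary curve which, after contraction, heads a standard zigzag, and then to identify the induced fibration with $\phi_0$.

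First I would apply Proposition~\ref{fibrations}(a) to get an SNC-completion $X$ of $S$ and an extension $\bar f\colon X\to\P^1$ of $f$ all of whose general fibres are isomorphic to $\P^1$; write $D=X\setminus S$. Since $f$ takes values in $\C$, the fibre $\bar f^{-1}(\infty)$ lies in $D$; and since a general fibre of $\bar f$ meets $S$ in a copy of $\C=\P^1\setminus\{\mathrm{pt}\}$, exactly one component $A$ of $D$ is horizontal for $\bar f$, with $\bar f|_A\colon A\to\P^1$ an isomorphism (a section), while every other component of $D$ is vertical; in particular two meeting vertical components share a fibre.

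Next I would normalise the completion, using only operations over $D$, so that $S$ and $f|_S$ are never affected: contracting a $(-1)$-curve inside $D$ preserves the SNC property and, for a vertical curve, keeps $\bar f$ a $\P^1$-fibration, and the moves \eqref{makezero}, \eqref{movezeros} and reversion of Section~2 are of this kind. With these I would (i) reduce the tree $\bar f^{-1}(\infty)$ to a single smooth rational curve $C_0$ with $C_0\cdot C_0=0$ that still meets $A$ once; (ii) by Proposition~\ref{standardcompletion}(2) bring $D$ to linear form, arranging the contractions so as to avoid $A$ (a completion with all of $D$ vertical would place the complete general fibre of $\bar f$ inside the affine surface $S$); linearity then forces $A$ to meet, besides $C_0$, the boundary part of at most one further fibre $\bar f^{-1}(c_+)$, $c_+\in\C$, so $D$ has the shape $C_0-A-(\text{a chain over }c_+)$; (iii) make $A\cdot A=0$ by elementary transformations supported on the irreducible fibre $C_0$ (blow up a point of $C_0$, blow down the proper transform of $C_0$ --- all inside $D$), and reduce the chain over $c_+$ to self-intersections $\le-2$, re-fixing $A\cdot A=0$ afterwards if the last reduction bumped it. The outcome is a standard completion with dual graph $[[0,0,\omega_2,\ldots,\omega_k]]$, $C_0$ leading, $A=C_1$, and $C_2,\ldots,C_k$ vertical over $c_+$.

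On this standard completion $\phi_0$ is, by its definition, characterised by $C_0=\phi_0^{-1}(\infty)$, by $\phi_0|_{C_1}$ an isomorphism and by $\phi_0$ being constant on $C_2,\ldots,C_k$; our $\bar f$ has exactly these three properties, so $\bar f$ and $\phi_0$ differ only by a M\"obius transformation of the target, that is, they are the same fibration --- whence $f=\phi_0|_S$. The main difficulty is step (iii), and more broadly the whole normalisation to standard form while keeping $\bar f^{-1}(\infty)$ the leading $0$-vertex: one has to track the interplay of the three boundary moves with the SNC condition (for instance a leaf of $\bar f^{-1}(\infty)$ meeting $A$ may first have to be created by a blow-up before the inner $(-1)$-curves of that fibre can be contracted). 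The point that makes it run is that $\bar f^{-1}(\infty)$ is a full fibre lying entirely inside $D$, so it can be freely contracted and freely inflated by elementary transformations without ever disturbing $S$ --- giving exactly the room needed to push all remaining boundary onto one side and normalise it.
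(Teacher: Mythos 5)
Your overall strategy --- extend $f$ via Proposition~\ref{fibrations}(a), single out the unique section $A$ and the fibre $\bar f^{-1}(\infty)\subset D$, contract the latter to a single $0$-curve $C_0$, linearise the boundary, and finish with the elementary transformations (\ref{makezero}) before identifying the result with $\phi_0$ --- is the same as the paper's, and your final identification is sound: two $\P^1$-fibrations sharing the irreducible $0$-curve $C_0$ as a fibre have linearly equivalent general fibres, hence coincide up to a M\"obius transformation of the target.

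The genuine gap is in your step (ii), where you justify that the contraction to a linear graph can be arranged to avoid $A$ by asserting that ``a completion with all of $D$ vertical would place the complete general fibre of $\bar f$ inside the affine surface $S$.'' This is false: contracting $A$ turns $\bar f$ into a rational map with a base point at the image of $A$, and the closures of all general fibres simply pass through that base point on the boundary, meeting $D$ without contradicting the verticality of the remaining components. (Compare $f=x$ on $\C^2\subset\P^2$: the line at infinity is vertical for the pencil $x/z$, and every fibre closure meets it in the single indeterminacy point $[0:1:0]$.) So you have not shown that a linearisation keeping $A$ exists, and Proposition~\ref{standardcompletion}(2) by itself gives no control over which curves get contracted. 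The paper sidesteps this by showing that the pseudo-minimal completion furnished by Proposition~\ref{fibrations}(a) is \emph{already} linear: if some vertex had degree $\geq 3$, all but two of its branches would have to be contractible (since the whole graph contracts to a linear one); a contractible branch not containing the section consists of vertical curves and would have to contain a $(-1)$-vertex of degree $\leq 2$, contradicting pseudo-minimality; and the branch containing the section also contains the end-vertex $C_0$ of weight $0$, which can never be contracted because blow-downs only increase its weight. You already have every ingredient needed to run this argument (pseudo-minimality, and the weight-$0$ curve $C_0$ hanging off $A$); substituting it for your step (ii) also removes most of the bookkeeping in step (iii), since pseudo-minimality forces the vertical weights to be $\leq -2$ and only the moves (\ref{makezero}) adjusting $A\cdot A$ remain.
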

\begin{proof}
 Let $X$ be a pseudo-minimal SNC-completion of $S$ such that $f$ extends to a regular function $\bar f$. A general fiber of $\bar f$ intersects $D=X\setminus
S = C_0 \cup \ldots \cup C_k$ in precisely one point, therefore one curve in $D$ (say $C_1$) is a section of $\bar f$, and on every other
curve in $D$ the function $\bar f$ is constant. The set $f^{-1}(\infty)\subset D$ is contractible to a rational curve (apply \ref{fibrations}(c) to a the
desingularisation of $X$) which intersects $C_1$ transversally
(since $C_1$ is a section). So by pseudo-minimality $\bar f^{-1}(\infty)$ is already an irreducible curve (say $C_0$) with self-intersection 0. Moreover, by the
absence of further sections, $C_0$ is disjoint from $C_2,\ldots,C_k$. Assume that the
dual graph $\Gamma_X$ is not linear and let $C_i$ be a vertex of degree $\geq 3$. By Proposition \ref{uniquecompletion}(2) all but two branches at $C_i$
are contractible, but by pseudo-minimality the only branch that could be not minimal is the one containing $C_1$. On the
other hand, the branch containing $C_1$ cannot be contractible since it contains also the vertex $C_0$, which has weight 0, and thus is not contractible.
Altogether $\Gamma_X$ is linear and of
the form $\Gamma_X = [[0,n,\omega_2,\ldots,\omega_k]]$ with $n$ arbitrary and $\omega_i \leq -2$, and can be transformed using the modification (\ref{makezero})
into a standard completion such that the fibration $\phi_0$ coincides with $\bar f$.
\end{proof}

\begin{corollary}[\cite{FKZ}] \label{cor-c-fib}
 For a Gizatullin surface $S$ there are as many $\C$-fibrations up to an automorphism as there are standard completions of $S$ up to an isomorphism.
\end{corollary}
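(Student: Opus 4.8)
\textbf{Proof proposal for Corollary~\ref{cor-c-fib}.}

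The plan is to promote Proposition~\ref{c-fib} to a bijection. To a standard completion $X$ I associate the $\C$-fibration $\phi_0^X|_S$, and I claim that the assignment $X\mapsto\phi_0^X|_S$ descends to a bijection between isomorphism classes of standard completions of $S$ and $\mathrm{Aut}(S)$-orbits of $\C$-fibrations of $S$. Surjectivity is exactly Proposition~\ref{c-fib}: every $\C$-fibration equals $\phi_0^X$ as a fibration for a suitable standard completion $X$. So the content reduces to showing that two standard completions $X$ and $Y$ give $\mathrm{Aut}(S)$-equivalent $\C$-fibrations \emph{if and only if} they are isomorphic as completions.

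For well-definedness, let $\psi\colon X\to Y$ be an isomorphism of completions, so $\alpha:=\psi|_S\in\mathrm{Aut}(S)$ and $\psi$ carries the boundary zigzag of $X$ onto that of $Y$ as a weighted graph. Apart from the two degenerate completions $[[0,0]]$ (where $S\cong\C^2$) and $[[0,0,0]]$, whose zigzags have extra symmetry and which are treated by inspection, a standard zigzag $[[0,0,\omega_2,\dots,\omega_k]]$ has a unique end vertex of weight $0$, namely $C_0$; hence $\psi(C_0^X)=C_0^Y$. Since $\phi_0$ is determined as a fibration by the requirement that its fibre over $\infty$ be the $0$-curve $C_0$ (Proposition~\ref{fibrations}(d), together with the fact that two $\P^1$-fibrations sharing a fibre coincide as fibrations), it follows that $\phi_0^Y\circ\psi$ and $\phi_0^X$ are the same fibration, and restricting to $S$ shows that $\alpha$ carries the fibres of $\phi_0^X$ to those of $\phi_0^Y$.

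The substance of the corollary is the converse. Suppose $\alpha\in\mathrm{Aut}(S)$ carries the fibres of $\phi_0^X$ to those of $\phi_0^Y$. Comparing the two reduced regular functions $\phi_0^Y\circ\alpha$ and $\phi_0^X$ on $S$, each with general fibre $\C$, produces an affine coordinate change $\mu$ of $\C$ with $\phi_0^Y\circ\alpha=\mu\circ\phi_0^X$, so $\alpha$ extends to a birational modification of completions $\beta\colon X\dashrightarrow Y$ intertwining the extended $\P^1$-fibrations $\bar\phi_0^X$ and $\bar\phi_0^Y$ up to the automorphism of $\P^1$ induced by $\mu$ (which fixes $\infty$). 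I would then prove that $\beta$ is biregular by rerunning the pseudo-minimality analysis from the proof of Proposition~\ref{c-fib}: after resolving $\beta$ over the boundaries one obtains a common blow-up $Z$ of $X$ and $Y$ carrying a single $\P^1$-fibration restricting to both; over $\infty$ the fibre of each $\bar\phi_0$ is already the irreducible $0$-curve $C_0$ meeting the $0$-curve section $C_1$, so nothing is blown up there, and over $\C$ pseudo-minimality of the two standard completions forbids any non-trivial modification away from the section $C_1$. Hence $Z\to X$ and $Z\to Y$ are isomorphisms and $X\cong Y$. Equivalently, this step is the assertion that the standard completion realising a prescribed $\C$-fibration as $\phi_0$ is unique up to isomorphism of completions, which can also be extracted from the rigidity of standard completions in Proposition~\ref{standardcompletion}.

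I expect this last step to be the main obstacle: one must rule out that $\alpha$ realises a genuinely birational, non-biregular relation between the two standard completions while still matching the $\C$-fibrations. The resolution rests on the two defining features of a standard completion invoked above — pseudo-minimality, and the fact that the fibre component $C_0$ and the section $C_1$ occupy fixed positions at the front of the zigzag — so that any modification respecting $\phi_0$ and preserving standardness is forced to be trivial.
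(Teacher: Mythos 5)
The paper gives no proof of this corollary at all --- it is quoted from \cite{FKZ} and presented as an immediate consequence of Proposition~\ref{c-fib} --- so your proposal has to be judged on its own merits. Your architecture (surjectivity from Proposition~\ref{c-fib}; well-definedness via the unique weight-$0$ end vertex of the zigzag; injectivity via biregularity of the induced birational map between the two standard completions) is the natural one, and the surjectivity and well-definedness parts are fine.

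The gap is in the injectivity step, exactly where you predicted the main obstacle, and your proposed resolution does not close it. You assert that ``over $\infty$ the fibre of each $\bar\phi_0$ is already the irreducible $0$-curve $C_0$ \dots, so nothing is blown up there.'' This is a non sequitur: the fact that both fibres over $\infty$ are irreducible $0$-curves does not prevent $\beta$ from performing nontrivial elementary transformations centered on that fibre, since such a transformation carries an irreducible $0$-fibre to an irreducible $0$-fibre. Worse, this is the \emph{only} place anything can happen: in a minimal resolution $Z$ with $p\colon Z\to X$, $q\colon Z\to Y$, any $(-1)$-curve $E$ contracted by $p$ satisfies $q(E)\cdot q(E)\ge -1$, so $q(E)$ must be one of the two $0$-curves $C_0^Y,C_1^Y$ (all other boundary weights are $\le -2$); and $q(E)=C_1^Y$ is excluded because $\bar\phi_0^Y$ is nonconstant on the section $C_1^Y$ while $\bar\phi_0^Y\circ q$ is constant on $E$ (which lies in a fibre of $\bar\phi_0^X\circ p$). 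Hence $q(E)=C_0^Y$ and $E$ lies over the fibre at infinity. So your argument rules out modifications everywhere except over $\infty$ and then dismisses that case by assertion. What actually needs proving is that every such chain of elementary transformations over the $\infty$-fibre linking two standard completions with the same $\phi_0$ is absorbed by an automorphism of $S$; note that modification~(\ref{makezero}), used to pass from the pseudo-minimal completion of Proposition~\ref{c-fib} to a standard one, involves choices of base points on $C_0\setminus C_1$, and the independence of the resulting completion from these choices is precisely the content at stake. Your fallback appeal to Proposition~\ref{standardcompletion} does not help either: that proposition only gives uniqueness of the \emph{dual graph} up to reversion, not of the completion, and Gizatullin surfaces can carry moduli of pairwise non-isomorphic standard completions with the same graph. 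This uniqueness statement is the nontrivial input from \cite{FKZ} that the paper is implicitly citing.
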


Note that there are families of Gizatullin surfaces that have a unique standard completion up to reversion. Check \cite{FKZ} for a 
description of such Gizatullin surfaces. The surfaces called Danielewski surfaces that are introduced in Section 4 are of this kind. Later, Corollary 
\ref{cor-c-fib} will be used e.g. in Proposition \ref{uniquecompletion}.

For $\C^*$-fibrations there are two different cases: The fibration could have either two sections at infinity or one double-section at infinity (i.e. a curve
such that the fibration restricted to this curve is a ramified 2-sheeted covering). First we deal with the
case when there are two sections.

\begin{proposition} \label{cs-fib}
 Let $f: S\rightarrow \C$ be a $\C^*$-fibration on a Gizatullin surface $S$, and let $Y$ be a pseudo-minimal SNC-completion of $S$ such that the
boundary divisor $Y\setminus S$ contains two sections. 

(1) We may choose $Y$ such that the dual graph is of the form 
\begin{center}
\begin{tikzpicture}
\node[left] at (0,0) {$\Gamma_Y = \quad\quad$};
\draw (0,0)--(0.2,0);
\draw[dotted] (0.2,0)--(1,0);
\draw (0.8,0) -- (5.2,0);
\draw[dotted] (5,0)--(5.8,0);
\draw (5.8,0)--(6,0);
\draw[fill] (0,0) circle(0.05) node[above]{$\eta_{-m}$} node[below]{$D_{-m}$};
\draw[fill] (1,0) circle(0.05) node[above]{$\eta_{-2}$} node[below]{$D_{-2}$};
\draw[fill] (2,0) circle(0.05) node[above]{$0$} node[below]{$D_{-1}$};
\draw[fill] (3,0) circle(0.05) node[above]{$0$} node[below]{$D_0$};
\draw[fill] (4,0) circle(0.05) node[above]{$\eta_1$} node[below]{$D_1$};
\draw[fill] (5,0) circle(0.05) node[above]{$\eta_2$} node[below]{$D_2$};
\draw[fill] (6,0) circle(0.05) node[above]{$\eta_n$} node[below]{$D_n$};
\end{tikzpicture}
\end{center}
with $m,n\geq 0$, $\eta_1\leq -1$ and $\eta_i \leq -2$ for $|i|\geq 2$ and additionally $D_0=\bar f^{-1}(\infty)$.

(2) There is a $\omega_1$-semistandard completion $X\supset S$ with $\omega_1 \geq 0$ and
\begin{center}
\begin{tikzpicture}
\node[left] at (0,0) {$\Gamma_X = \quad\quad$};
\draw (0,0)--(2.2,0);
\draw[dotted] (2.2,0)--(2.8,0);
\draw (2.8,0)--(3,0);
\draw[fill] (0,0) circle(0.05) node[above]{$0$} node[below]{$C_0$};
\draw[fill] (1,0) circle(0.05) node[above]{$\omega_1$} node[below]{$C_1$};
\draw[fill] (2,0) circle(0.05) node[above]{$\omega_2$} node[below]{$C_2$};
\draw[fill] (3,0) circle(0.05) node[above]{$\omega_k$} node[below]{$C_k$};
\end{tikzpicture}
\end{center}
such that $Y$ is obtained from $X$ by (i) a sequence of inner (unless $S=\C^2$) blow ups at infinitely near points 
followed by (ii) a modification of type (\ref{movezeros}):
$$ [[0,\omega_1,\omega_2,\ldots,\omega_k]] \stackrel{(i)}{\rightsquigarrow} [[0,0,\eta_{-m},\ldots,\eta_{-2},\eta_1,\eta_2,\ldots,\eta_n]]$$
$$ \stackrel{(ii)}{\rightsquigarrow} [[\eta_{-m},\ldots,\eta_{-2},0,0,\eta_1,\ldots,\eta_n]].$$
\end{proposition}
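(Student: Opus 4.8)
The plan is to run the argument used for Proposition~\ref{c-fib}, the only structural change being that a general fibre of $\bar f$ now meets the boundary in two points instead of one. By Proposition~\ref{fibrations}(a) I start from a pseudo-minimal SNC-completion $Y$ to which $f$ extends as a regular map $\bar f\colon Y\to\P^1$ with general fibre $\cong\P^1$; put $D:=Y\setminus S$. A general fibre is a $\C^*$ completed by two points, and these lie on the two given sections $A_1,A_2\subset D$ (there is no third (multi)section, since a general fibre already meets $A_1\cup A_2$ in both of its boundary points); hence each component of $D$ is either $A_1$ or $A_2$ or lies in a fibre over a special value, and in particular $\bar f^{-1}(\infty)\subset D$ because $f$ has target $\C$. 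Next I show $\Gamma_Y$ is a path: it is connected, it has no cycle (a modification invariant, vanishing for the standard completion of Proposition~\ref{standardcompletion}(1)), and every vertex has degree $\le 2$ — for if a vertex $C$ had degree $\ge 3$ then by Proposition~\ref{uniquecompletion}(2) all but at most two branches at $C$ would be contractible, and, exactly as in the proof of Proposition~\ref{c-fib}, a contractible branch contains a $(-1)$-vertex of degree $\le 2$, contradicting pseudo-minimality.

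Now I pin down $\bar f^{-1}(\infty)$. It is a tree of rational curves (contract it to a $0$-curve by Proposition~\ref{fibrations}(c) after desingularising), and if it were reducible it would contain a $(-1)$-curve which, sitting on the path $\Gamma_Y$, has degree $\le 2$ — again contradicting pseudo-minimality. So $\bar f^{-1}(\infty)=:D_0$ is a single smooth rational curve with $D_0^2=0$; being a regular fibre it meets $D$ in exactly two points, one on each of $A_1,A_2$, so the two neighbours of $D_0$ on the path are precisely $A_1,A_2$; relabel them $D_{-1},D_1$ so the path reads
$$D_{-m}-\cdots-D_{-2}-D_{-1}-D_0-D_1-D_2-\cdots-D_n .$$
For $|i|\ge 2$ the curve $D_i$ is vertical, and since consecutive members of each sub-chain $D_{-2}-\cdots-D_{-m}$ and $D_2-\cdots-D_n$ meet, all of $D_{-2},\ldots,D_{-m}$ lie in one fibre $\bar f^{-1}(c_-)$ and all of $D_2,\ldots,D_n$ in one fibre $\bar f^{-1}(c_+)$, with $c_\pm$ finite (they do not contain $D_0$). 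Each $\bar f^{-1}(c_\pm)$ is reducible — an irreducible fibre would meet each of the two sections, whereas $D_{\pm2},\ldots$ are adjacent only to $D_{\mp1}$ on the path — so by negativity of the intersection form on the components of a reducible fibre $D_i^2<0$ for $|i|\ge 2$; as these curves are not sections and have path-degree $\le 2$, pseudo-minimality upgrades this to $D_i^2\le -2$. Finally, the modification~(\ref{movezeros}) with the zero vertex $D_0$ as pivot shifts weight between its neighbours $D_{-1}$ and $D_1$ without altering the shape of $\Gamma_Y$ or the role of $D_0=\bar f^{-1}(\infty)$; using it I arrange $D_{-1}^2=0$, and then $\eta_1:=D_1^2\le -1$. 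With $\eta_i:=D_i^2$ this is exactly the graph asserted in part~(1).

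For part~(2), take this $Y$, with $\Gamma_Y=[[\eta_{-m},\ldots,\eta_{-2},0,0,\eta_1,\ldots,\eta_n]]$, and first slide the pair of zeros to the left end by repeated use of~(\ref{movezeros}):
$$[[\eta_{-m},\ldots,\eta_{-2},0,0,\eta_1,\ldots,\eta_n]]\ \rightsquigarrow\ [[0,0,\eta_{-m},\ldots,\eta_{-2},\eta_1,\ldots,\eta_n]],$$
which is step~(ii) read backwards. Then contract $(-1)$-vertices: by Proposition~\ref{standardcompletion}(2) this (already linear) graph contracts to a linear graph, and one checks the contraction may be chosen to keep the two leading zeros and to terminate at an $\omega_1$-semistandard graph $[[0,\omega_1,\omega_2,\ldots,\omega_k]]$ with $\omega_i\le -2$ for $i\ge 2$ and $\omega_1\ge 0$: $D_0$ persists as $C_0$, while the $(-1)$-vertices appearing strictly to its right — by part~(1) only $\eta_1$ may be $-1$ — get contracted, their weight being absorbed at the $\omega_1$-position, which thereby becomes non-negative. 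Reversing these contractions exhibits $Y$ as obtained from $X=[[0,\omega_1,\ldots,\omega_k]]$ by a sequence of inner blow ups at infinitely near points (step~(i)) followed by~(\ref{movezeros}) (step~(ii)); an outer blow down is forced only when $S=\C^2$, the stated exception.

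I expect the last step to be the main obstacle: one must verify that $[[0,0,\eta_{-m},\ldots,\eta_{-2},\eta_1,\ldots,\eta_n]]$ really contracts to an $\omega_1$-semistandard graph, that (apart from the $\C^2$ case) only inner blow downs occur, and that this produces $\omega_1\ge 0$ together with $\omega_i\le -2$ for $i\ge 2$ — which is precisely where the normalisations of part~(1), especially $D_{-1}^2=0$ and the fact that among the $\eta_i$ only $\eta_1$ may equal $-1$, are used. A minor further point is to justify, via pseudo-minimality and the structure of reducible fibres, that no $D_i$ with $|i|\ge 2$ has non-negative self-intersection.
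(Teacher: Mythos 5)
Your overall strategy is the paper's: establish linearity of $\Gamma_Y$ via the contractibility-of-branches argument, identify $\bar f^{-1}(\infty)$ as an irreducible $0$-curve flanked by the two sections, and then obtain the semistandard model by sliding the pair of zeros to one end and contracting. Two steps, however, are asserted rather than proved, and one of them can fail as written. First, in part (1) you deduce from pseudo-minimality alone that $\bar f^{-1}(\infty)$ is irreducible and that the two sections meet it in two \emph{distinct} points. Pseudo-minimality is weaker than minimality (the paper itself allows $(-1)$-vertices of degree $\le 2$ on branches carrying sections), and after contracting the fibre over $\infty$ to a single $0$-curve $C$ the two sections may pass through a common point of $C$, so that the resulting completion is no longer SNC and $D_{-1},D_1$ need not both be neighbours of $D_0$ in a linear graph. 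The paper repairs exactly this by an elementary transformation: blow up the common point of $C'$, $C''$ and $C$, blow down the strict transform of $C$, and repeat until the sections are separated. This is the content of the phrase ``we may choose $Y$'' in the statement; your argument silently assumes it.

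Second, and more seriously, in part (2) you always slide the pair of zeros to the \emph{left}. The paper's key observation is that the reduced chain $[[\eta_{-m},\ldots,\eta_{-2},\eta_1,\ldots,\eta_n]]$ has at most one $(-1)$-vertex, hence contracts to a minimal linear chain in such a way that \emph{at least one} of its two endvertices survives --- and one must move the zeros to the side of the endvertex that gets \emph{consumed}, so that every blow-down is inner (each contracted vertex keeps a neighbour on both sides). With your fixed choice the construction breaks down, for instance when $n=1$ and $\eta_1=-1$: then $[[0,0,\eta_{-m},\ldots,\eta_{-2},-1]]$ forces an outer blow-down of the right endvertex, and $Y$ is not obtained from $X$ by inner blow-ups as required. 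The fix is precisely the paper's case distinction: zeros to the left if $D_n$ survives the contraction, zeros to the right if $D_{-m}$ does.
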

\begin{proof}
 The proof of (1) works very similarly to the proof above. Again, by Proposition \ref{fibrations}(c), $\bar f^{-1}(\infty)$ is contractible so a curve $C$ with
self-\break intersection equal to 0 and the two sections $C'$ and $C''$ intersect $C$ transverally since they are sections. Moreover, we may assume that 
$C'$ and $C''$
intersect $C$ in two different points. Indeed, otherwise blow up the common intersection point and blow down the strict transform of $C$, and repeat this
procedure until $C'$ and $C''$ intersect $\bar f^{-1}(\infty)$ in two different points. Thus we get an SNC-completion $Y$ with $Y\setminus S =C\cup C'\cup
C''\cup C_1\cup\ldots\cup C_l$ such that $C\cdot C= 0$, $C\cdot C'=1$, $C\cdot C''=1$ and $C$ is disjoint from $C_1\cup\ldots\cup C_l$.
Assume again that the dual graph $\Gamma_Y$ is not linear. Then for a vertex of degree $\geq 3$ all but two branches are contractible, see Proposition
\ref{uniquecompletion}(2). But by pseudo-minimality a
contractible branch must contain one of the curves $C'$ or $C''$. However, then it also
contains the zero vertex corresponding to $C$ and hence it is not contractible. So we have the following picture (note that $D_2$ and $D_{-2}$ may or may not be 
in the same fiber)
\begin{center}
 \begin{tikzpicture}[scale=0.8]
  \draw (-0.3,3)--(3.3,3) node[above] at (1.5,3) {$C$};
 \draw (0,-0.3)--(0,3.3) node[left] at (0,1.5) {$C'$};
 \draw (3,-0.3)--(3,3.3) node[right] at (3,1.5) {$C''$};
 \draw (-0.2,-0.1) to [out=20,in=160] (0.8,-0.1) node[below] at (0.3,0){$D_{-2}$};
\draw[dotted] (0.5,-0.1) to [out=20,in=160] (1.5,-0.1);
\draw (1.2,-0.1) to [out=20,in=160] (2.2,-0.1) node[below] at (1.7,0){$D_{-m}$};
 \draw (0.8,0.9) to [out=20,in=160] (1.8,0.9) node[below] at (1.3,1){$D_{n}$};
\draw[dotted] (1.5,0.9) to [out=20,in=160] (2.5,0.9);
\draw (2.2,0.9) to [out=20,in=160] (3.2,0.9) node[below] at (2.7,1){$D_{2}$};
 \pgftransformxshift{4cm};
 \draw[->] (0,1.5)--(1,1.5) node[above] at (0.5,1.5){$f$};
 \pgftransformxshift{2cm};
 \draw (0,3.3)--(0,-0.3) node[right] {$\P^1$};
 \draw[fill] (0,3) circle(0.05) node[right] {$\infty = f(C)$};
 \end{tikzpicture}

\end{center}
 and thus $\Gamma_Y = [[\ldots,
\eta_{-2},a,0,b,\eta_2, \ldots]]$. This completion may be transformed by modifications (\ref{movezeros}) into the desired form.

Claim (2) follows from the fact that the graph $\tilde\Gamma_Y=[[\eta_{-m},\ldots,\eta_n]]$ can be contracted to a minimal graph
$\tilde\Gamma=[[\omega_2,\ldots,\omega_k]]$ such that at least one endvertex of $\tilde\Gamma_Y$ does not get contracted. Indeed $\tilde\Gamma_Y$ has at most
one (-1)-vertex. If the right  endvertex $D_n$ is not contracted then move the zeros in $\Gamma_Y$ to the left $[[0,0,\eta_{-m},\ldots,\eta_n]]$, and then
make the contraction by only inner blow downs onto a completion with dual graph $[[0,\omega_1,\omega_2,\ldots,\omega_k]]$.
If the left endvertex $D_{-m}$ is not contracted then repeat the same procedure by moving the zeros to the right $[[\eta_{-m},\ldots,\eta_{n},0,0]]$.
\end{proof}

The $\omega_1$-standard completion from the above proposition can be transformed into a standard completion by modifications (\ref{makezero}) and there are
$\omega_1$ parameters occuring in this process. Therefore we get the following corollary:

\begin{corollary}
Let $S$ be a Gizatullin surface such that each standard completion is determined by its dual graph\footnote{A criterion for this property can be extracted from
\cite{FKZ}. It applies to Danielewski surfaces.}. Then the family of $\C^*$-fibrations having a
pseudo-minimal
SNC-completion with a given dual graph that is obtained as in Proposition \ref{cs-fib} from a $\omega_1$-semistandard completion has at most $\omega_1$
parameters.
\end{corollary}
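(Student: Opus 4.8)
The plan is to reduce the count of $\C^*$-fibrations to a count of $\omega_1$-semistandard completions of $S$, and then to read off the $\omega_1$ parameters directly from the modification (\ref{makezero}). First I would invoke Proposition \ref{cs-fib}(2): each $\C^*$-fibration $f$ in the family under consideration carries its pseudo-minimal completion $Y$ (of the prescribed dual graph $\Gamma_Y$, with $D_0=\bar f^{-1}(\infty)$), and $Y$ is obtained from an $\omega_1$-semistandard completion $X$ with $\Gamma_X=[[0,\omega_1,\omega_2,\ldots,\omega_k]]$ by the two steps (i) and (ii) of that proposition. For a fixed $\Gamma_Y$ these two steps involve no continuous parameters: (ii) is a sequence of modifications of type (\ref{movezeros}), and (i) is a sequence of inner blow ups at infinitely near points realizing the prescribed graph; neither step uses an outer blow up, so by the observation recalled in Section 2 each lifts uniquely from weighted graphs to surfaces. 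Hence $X$ is recovered from $f$ (by reversing (i) and (ii) starting from $Y$) up to isomorphism and finitely many discrete choices, and conversely $X$ together with the fixed combinatorial data reconstructs $Y$, hence $f$. So it suffices to bound by $\omega_1$ the number of parameters of the family of $\omega_1$-semistandard completions of $S$ with dual graph $[[0,\omega_1,\omega_2,\ldots,\omega_k]]$.

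Next I would locate the standard completion. Applying (\ref{makezero}) in reverse $\omega_1$ times transforms $[[0,\omega_1,\omega_2,\ldots,\omega_k]]$ into $[[0,0,\omega_2,\ldots,\omega_k]]$, leaving the tail $\omega_2,\ldots,\omega_k$ untouched; each reverse step is the inner blow up of the determined point $C_0\cap C_1$ followed by the blow down of the $(-1)$-curve that appears at the end of the graph, so no outer blow up occurs and the lift to surfaces is again unique. Thus every $X$ in our family maps to a standard completion of $S$ with dual graph $[[0,0,\omega_2,\ldots,\omega_k]]$, and by the hypothesis (together with Proposition \ref{standardcompletion}(1)) this standard completion is unique up to isomorphism; call it $X_{\mathrm{std}}$.

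Finally I would count. Running (\ref{makezero}) forward from $X_{\mathrm{std}}$ produces successively $[[0,1,\omega_2,\ldots]],[[0,2,\omega_2,\ldots]],\ldots,[[0,\omega_1,\omega_2,\ldots]]$; each forward step is an outer blow up of a point of the curve $C_0$ (of weight $0$) not lying on its unique neighbour $C_1$, i.e. a point ranging over $C_0\setminus C_1\cong\C$ --- a single parameter --- followed by the uniquely determined inner blow down. Conversely, every $X$ in our family is obtained from $X_{\mathrm{std}}$ in precisely this way, by performing the $\omega_1$ forward steps at the points that the reverse process contracted. Hence the family of such $X$ is the image of a map from an $\omega_1$-dimensional parameter space, so it has at most $\omega_1$ parameters; by the first paragraph the family of $\C^*$-fibrations has at most $\omega_1$ parameters as well.

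The part I expect to be the real obstacle is the bookkeeping of the first paragraph: making precise that $f$ determines $X$ up to finitely many choices, which rests on the pseudo-minimal completion $Y$ attached to $f$ (for the prescribed dual graph) being essentially unique and on the contraction recipe of Proposition \ref{cs-fib}(2) being canonical. The remaining points --- that reversing (\ref{makezero}) introduces no freedom, and that each forward application of (\ref{makezero}) contributes exactly one free parameter, namely a point on a rational curve --- are routine verifications.
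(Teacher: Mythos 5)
Your argument is correct and is essentially the paper's: the paper's justification is the single sentence preceding the corollary, namely that the $\omega_1$-semistandard completion is reached from the (unique, by hypothesis) standard completion by $\omega_1$ applications of modification (\ref{makezero}), each contributing one parameter via its outer blow up, while all the remaining steps relating the completion to the fibration are inner and hence parameter-free. Your write-up merely makes explicit the bookkeeping (uniqueness of lifts of inner modifications via the Zariski observation, and the identification of the parameter with a point of $C_0\setminus C_1\cong\C$) that the paper leaves implicit.
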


Let us take a closer look how the fibers of a $\C^*$-fibration $f: S \rightarrow \C$ with two sections at the boundary can look like. For simplicity assume
that the surface $S$ is smooth. Clearly every fiber $f^{-1}(a)$ has precisely one connected component isomorphic to $\C^*$ or to $\C\vee\C$ (two lines
intersecting transversally in one point) namly the one connecting $D_{-m}\cup\ldots\cup D_{-1}$ to
$D_1\cup\ldots\cup D_n$. All other connected components are isomorphic to $\C$, clearly all these $\C$ components are adjecent to a curve
$D_{-m},\ldots,D_{-2},D_2\ldots D_n$. By Proposition \ref{fibrations}(b) the total number of $\C$ and $\C\vee\C$ components is equal to $\chi(S)$. 

The next proposition will clarify the last possibility, namely when there is a double-section at infinity.

\begin{proposition} \label{cs-fib-ds}
 Let $f: S\rightarrow \C$ be a $\C^*$-fibration on a Gizatullin surface $S$, and let $X$ be a pseudo-minimal SNC-completion of $S$ such that $D=X\setminus S$
contains a double-section $C$. Then $\Gamma_X$ is of the form:
\begin{center}
\begin{tikzpicture}
 \draw (0,0.75)--(0,-0.75)--(0,0)--(2.2,0);
 \draw[dotted] (2.2,0)--(2.8,0);
 \draw (2.8,0)--(3,0);
 \draw[fill] (0,0.75) circle (0.05) node[left]{$-2$};
\draw[fill] (0,-0.75) circle (0.05) node[left]{$-2$};
\draw[fill] (0,0) circle (0.05) node[left]{$-1$};
\draw[fill] (1,0) circle (0.05) node[above]{$-1$};
\draw[fill] (2,0) circle (0.05) node[above]{$-2$};
\draw[fill] (3,0) circle (0.05) node[above]{$-2$};
\end{tikzpicture}
\end{center}
In particular this situation only occurs when the dual graph of a standard completion of $S$ is of the form $$[[0,0,-4]], \quad [[0,0,-3,-3]] \quad \text{or} 
\quad [[0,0,-3,-2,\ldots,-2,-3]].$$
\end{proposition}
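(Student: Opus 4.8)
The plan is to proceed in two stages. First I would show that the boundary divisor $D = X \setminus S$, together with the double-section $C$, has a specific shape; then I would contract $D$ to a standard completion and read off the allowed dual graphs.

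For the first stage, the strategy mirrors the proofs of Propositions \ref{c-fib} and \ref{cs-fib}. Extend $f$ to $\bar f : X \to \P^1$. A general fiber meets $D$ in a single connected piece which, because the fibration restricted to $C$ is a ramified double cover, forces $C$ to be the unique component of $D$ that is a (double-)section; on every other component of $D$ the function $\bar f$ is constant. Now $\bar f^{-1}(\infty)$ is contractible to a smooth rational curve (Proposition \ref{fibrations}(c) applied to a desingularisation), and since $C \cdot C$ must be even for a double-section while pseudo-minimality forbids $(-1)$-vertices of low degree, I would argue that $\bar f^{-1}(\infty)$ consists of $C$ together with two further $(-2)$-curves $E_1, E_2$ and a linking $(-1)$-curve, giving the claimed ``fork'' $[[-2,-2]]$ attached through a $(-1)$-vertex to $C$. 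The point is that the double-section $C$ has self-intersection $-1$: this is what makes the $\C^*$-fibration collapse $C$ into the branch point picture. I would then show, exactly as before, that $\Gamma_X$ is linear away from this unavoidable fork and that all remaining components form a chain of $(-2)$-curves emanating from $C$, yielding the displayed graph
\[
[[\underbrace{-2}_{E_1},\ -1,\ \underbrace{-2,\ldots,-2}_{C,\ldots}]] \quad\text{with the second $-2$-fork attached to the middle $-1$.}
\]
(The displayed tikzpicture in the statement records this precisely.)

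For the second stage, I would contract this graph down to standard form. Contracting the $(-1)$-vertex in the fork turns the two adjacent $(-2)$-vertices into $(-1)$-vertices; contracting one of those raises the weights further, and after a short sequence of inner blow-downs the fork is absorbed. Running this contraction carefully and tracking the weights along the horizontal chain of $(-2)$'s, one lands on a linear graph, which (after the reversion / zero-creation moves of \eqref{makezero} and \eqref{movezeros} and an appeal to Proposition \ref{standardcompletion}) must be one of $[[0,0,-4]]$, $[[0,0,-3,-3]]$, or $[[0,0,-3,-2,\ldots,-2,-3]]$, the number of interior $(-2)$'s being the length of the original chain. Conversely, each of these admits the fork structure under a blow-up, so the classification is exact.

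The main obstacle I anticipate is the first stage: pinning down that the double-section necessarily has self-intersection $-1$ and that the fiber over $\infty$ has exactly the ``$(-2),(-1),(-2)$ fork'' shape, rather than some longer contractible configuration. Pseudo-minimality rules out $(-1)$-vertices of degree $\le 2$, but a double-section forces a ramification point which, upon the minimal resolution making $\bar f$ a genuine $\P^1$-fibration over $\infty$, produces precisely one $(-1)$-curve meeting $C$ and two chains terminating the fiber; the parity constraint ($C$ being a $2$-section means $C \cdot F$ is even on a $\P^1$-fiber $F$) together with the requirement that the fiber be a tree of rational curves of the right Euler characteristic forces the two terminal chains each to be a single $(-2)$. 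Making this rigorous — essentially a case analysis on the possible fiber types of a $\P^1$-fibration with a $2$-section — is the technical heart; everything after it is bookkeeping with the modifications \eqref{makezero} and \eqref{movezeros}.
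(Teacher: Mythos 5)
You have the right target configuration, but the step you yourself flag as ``the technical heart'' --- pinning down the shape of $\bar f^{-1}(\infty)$ and how the double-section meets it --- is exactly where the paper does something you are missing, and your substitute argument does not work. The paper's key observation is this: after contracting $\bar f^{-1}(\infty)$ to an irreducible curve $C$ with $C\cdot C=0$ (Proposition \ref{fibrations}(c)), the image $\pi(C')$ of the double-section satisfies $C\cdot\pi(C')=2$, and $C$ and $\pi(C')$ \emph{cannot} meet in two distinct points, because that would produce a loop in the dual graph of the boundary, which is impossible for a Gizatullin surface (the boundary graph must be contractible to a linear one). Hence they are tangent at a single point, and the minimal resolution of this tangency is exactly two blow-ups, producing the fiber $[[-2,-1,-2]]$ with $C'$ meeting the middle $(-1)$-curve transversally. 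Your replacement --- ``the parity constraint together with the requirement that the fiber be a tree of rational curves of the right Euler characteristic forces the two terminal chains each to be a single $(-2)$'' --- is not a proof: a priori the $2$-section could meet two distinct components of the fiber, or one component in two distinct points, and nothing in your parity remark excludes this. Moreover your assertion that ``$C\cdot C$ must be even for a double-section'' is false, and it contradicts your own later claim that $C\cdot C=-1$ (which is what the final graph indeed shows, with $C'$ the weight-$(-1)$ vertex on the horizontal chain, not one of the $(-2)$'s as in your displayed formula).

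Your second stage also starts with an inadmissible move: the central $(-1)$-vertex of the fork has degree $3$, so blowing it down is not a modification of type $(I^{-1})$ or $(O^{-1})$ and would create a triangle in the dual graph. The paper instead contracts the \emph{other} branch, namely the chain $\Gamma'=[[-1,-2,\ldots,-2]]$ beginning with $C'$ (which is the only $(-1)$-curve in that branch, by pseudo-minimality and contractibility of all but two branches at the trivalent vertex), onto the trivalent vertex; this yields a linear graph $[[-2,n,-2]]$ with $n\ge 0$, and these are exactly the graphs whose standard forms are $[[0,0,-4]]$, $[[0,0,-3,-3]]$ and $[[0,0,-3,-2,\ldots,-2,-3]]$. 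So the passage to the final list goes through $[[-2,n,-2]]$, not through collapsing the fork.
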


\begin{proof}
 Let $C'$ be the double-section. There is a contraction $\pi$ such that the set $\bar f^{-1}(\infty)$ is contractible to a curve $C$ with $C\cdot \pi(C') = 2$
so the curves $C$ and $\pi(C')$ do not intersect
transversally, indeed otherwise they would intersect in two points and the dual graph $\Gamma_X$ would contain a loop. We can see that the
dual graph of $\bar f^{-1}(\infty)$ is $[[-2,-1,-2]]$ and the double-section $C'$ intersects the (-1)-curve transversally. Indeed after two blow ups the
boundary is a SNC-divisor:
\begin{center}
 \begin{tikzpicture}
  \draw (0,-1)--(0,1) node[left]{$0$} node[left] at (0,-1){$C$};
  \draw (2,1) to  [out=185,in=90] (0,0) to [out=270,in=175] (2,-1) node[above] at (1.7,1){$\pi(C')$};
  \draw[dotted] (2,1) -- (2.3,1);
\draw[dotted] (2,-1) -- (2.3,-1);
  \draw[->] (3.5,0)--(3,0);
\pgftransformxshift{4cm};
 \draw (0,0)--(1.7,0);
 \draw[dotted] (1.7,0)--(2,0);
 \draw (0,-1.5)--(0.34,0.2) node[left] at (0.2,-0.75){$-1$} node[right] at (0,-1.5) {$\hat C$};
\draw (0,1.5)--(0.34,-0.2) node[left] at (0.2,0.75){$-1$} node[right] at (0,1.5) {$E$};
  \draw[->] (3,0)--(2.5,0);
\pgftransformxshift{3.5cm};
 \draw (0,0)--(1.7,0) node[above left]{$C'$};
 \draw[dotted] (1.7,0)--(2,0);
 \draw (0,-2)--(0.3,-0.5) node[left] at (0.2,-1.25){$-2$} node[right] at (0,-2) {$\hat{\hat C}$};
\draw (0,2)--(0.3,0.5) node[left] at (0.2,1.25){$-2$} node[right] at (0,2) {$\hat E$};
 \draw (0.3,0.7) to [out=250,in=110] (0.3,-0.7) node[above left] at (0.3,0) {$-1$};
 \end{tikzpicture}
\end{center}
So $\Gamma_X$ is of the form:
\begin{center}
\begin{tikzpicture}
 \draw (0,1)--(0,-1)--(0,0)--(1,0);
 \draw[dotted] (1,0)--(2,0.5);
 \draw[dotted] (1,0)--(2,-0.5);
 \draw[dotted] (1,0)--(2,0);
 \draw[fill] (0,1) circle (0.05) node[left]{$-2$};
\draw[fill] (0,-1) circle (0.05) node[left]{$-2$};
\draw[fill] (0,0) circle (0.05) node[left]{$-1$};
\draw[fill] (1,0) circle (0.05) node[below]{$C'$};
\node at (1.5,-0.8) {$\stackrel{\underbrace{\quad\quad\quad\quad}}{\Gamma'}$};
\end{tikzpicture}
\end{center}
Since, by Proposition \ref{uniquecompletion}(2), $\Gamma_X$ can be transformed into a linear graph the branch $\Gamma'$ is contractible and by pseudo-minimality
the only (-1)-curve in $\Gamma'$ is $C'$.
This shows that $\Gamma_X$ is of the desired form. After the contraction of $\Gamma$ we get a dual graph of the form $[[-2,n,-2]]$ with $n\geq 0$ and they all
lead to a standard completion as in the claim.
\end{proof}
\begin{remark}
In \cite[Lemmas 4.7+4.8]{kll} the $\C^*$-fibrations on affine toric surfaces were classified using other techniques. Affine toric surfaces are Gizatullin
surfaces and some of them have a completion as in Proposition \ref{cs-fib-ds}. Therefore it is expected that they have a $\C^*$-fibration that, in some sense,
looks essentially different from the other $\C^*$-fibrations. In fact it is possible to see that the twisted $\C^*$-fibrations of affine toric surfaces
correspond exactly to the special case appearing in the end of Lemma 4.8. in \cite{kll}.
\end{remark}

We conclude this section by the classification of $\C^*$-fibrations on $\C^2$. This result is well known: Brunella used it for his classification of complete
vector fields on $\C^2$ in \cite{Brunella}. He cites Suzuki \cite{Suzuki}. Here we give an alternative proof using Lemma \ref{cs-fib}.
\begin{proposition}[\cite{Suzuki}] \label{cs-c2}
 Let $f:\C^2 \rightarrow \C$ be a $\C^*$-fibration. Then (up to an automorphism) $f(x,y)$ is of the form $x^i(x^ly - Q(x))^j$ for $i,j$ relatively prime
numbers, $l\in\N_0$ and a polynomial $Q$ with $\deg(Q)<l$.
\end{proposition}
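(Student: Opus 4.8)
The plan is to apply Proposition \ref{cs-fib} to $S=\C^2$ and read off the explicit form of the fibration from the combinatorics of the completion. First I would fix the standard completion of $\C^2$: the zigzag $[[0,0]]$, realized by $X=\P^1_x\times\P^1_y$ with $C_0=\{x=\infty\}$ and $C_1=\{y=\infty\}$, so that the associated fibration $\phi_0$ is simply the coordinate function $x$. Since $\C^2$ has a unique standard completion and $\chi(\C^2)=1$, every $\C^*$-fibration on $\C^2$ automatically has two sections at infinity (the double-section case of Proposition \ref{cs-fib-ds} would force a standard graph like $[[0,0,-4]]$, which $\C^2$ does not admit), so Proposition \ref{cs-fib} applies with a $\omega_1$-semistandard completion $X$ having $\Gamma_X=[[0,\omega_1,\omega_2,\dots,\omega_k]]$; but for $\C^2$ the only option is $k=1$ and $\omega_1\geq 0$, i.e. $\Gamma_X=[[0,\omega_1]]$.

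Next I would make the birational modification of Proposition \ref{cs-fib}(2) explicit on the level of rational functions. Starting from $[[0,\omega_1]]$ one performs $\omega_1=:l$ inner blow ups at infinitely near points lying on (the successive strict transforms of) the intersection $C_0\cap C_1$, producing $[[0,0,\underbrace{-2,\dots,-2}_{\,l-1},-1]]$ — this is exactly step (i). On $\P^1_x\times\P^1_y$ with affine coordinates $(x,y)$, blowing up the point at infinity on $\{x=0\}$ and iterating $l$ times has the effect of replacing $y$ by $x^l y$ up to lower-order corrections; the most general such sequence of infinitely near blow ups over $\{x=0\}\cap\{y=\infty\}$ introduces a polynomial tail and yields the function $x^l y - Q(x)$ with $\deg Q < l$, where the coefficients of $Q$ are precisely the $\omega_1=l$ parameters predicted by the Corollary following Proposition \ref{cs-fib}. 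Step (ii), the move-zeros modification (\ref{movezeros}), does not change the surface or the affine fibers; it only relocates which boundary curve plays the role of $\bar f^{-1}(\infty)$, and one checks that the resulting $\C^*$-fibration is $f = x^i\,(x^l y - Q(x))^j$ for the coprime pair $(i,j)$ determined by the position of the $0,0$ block inside the final zigzag $[[\eta_{-m},\dots,\eta_{-2},0,0,\eta_1,\dots,\eta_n]]$; here $i+j$-type relations come from the requirement that the generic fiber be $\C^*$ rather than $\C$ or a more degenerate curve (equivalently, from $\chi(\C^2)=1$ via Proposition \ref{fibrations}(b), which also pins down that there is exactly one singular fiber, namely $\{x=0\}$).

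The main obstacle I expect is the bookkeeping in the middle step: translating the abstract sequence of inner blow ups over $C_0\cap C_1$ into the concrete change of coordinates $y\mapsto x^l y - Q(x)$, and verifying that $Q$ can be taken of degree $<l$ (higher-degree terms being absorbable by an automorphism of $\C^2$ of the form $(x,y)\mapsto(x,y-g(x))$). One must also check carefully that every such sequence indeed gives a section-section completion of $\C^2$ and conversely that no $\C^*$-fibration is missed — i.e. that the freedom in Proposition \ref{cs-fib}(2) is exhausted by the choice of $l$, of $Q$, and of $(i,j)$. Once the coordinate change is nailed down, reading off $f=x^i(x^ly-Q(x))^j$ and checking coprimality of $i,j$ (otherwise the generic fiber would be disconnected, contradicting that $f$ is reduced) is routine.
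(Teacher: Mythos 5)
Your reduction is sound and matches the paper's opening moves: ruling out the double-section case via Proposition \ref{cs-fib-ds} (the standard zigzag of $\C^2$ is $[[0,0]]$, not one of the listed graphs) and invoking Proposition \ref{cs-fib} to get a semistandard completion with $\Gamma_X=[[0,\omega_1]]$ is exactly how the argument starts. The gap is in the step you yourself flag as ``bookkeeping'': you assert that step (i) of Proposition \ref{cs-fib}(2) consists of $\omega_1$ inner blow ups over $C_0\cap C_1$ producing $[[0,0,-2,\dots,-2,-1]]$, and that this translates into the substitution $y\mapsto x^ly-Q(x)$. Neither claim is justified, and the first is not correct in general: an inner blow up at $C_0\cap C_1$ drops the weight of $C_0$ below $0$, and for a fibration such as $x^i(x^ly-Q(x))^j$ with $i,j>1$ the chain $[[\eta_{-m},\dots,\eta_{-2},\eta_1,\dots,\eta_n]]$ is governed by the continued-fraction data of the multiplicities and need not be a string of $-2$'s. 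Likewise ``$i,j$ determined by the position of the $0,0$ block'' is not a proof: the multiplicities of $\bar f$ along the boundary components are not encoded in the weighted graph alone. So the explicit formula --- which is the whole content of the proposition --- is never actually derived.

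The paper obtains the formula by a different and more robust device. Since $\chi(\C^2)=1$, Proposition \ref{fibrations}(b) forces exactly one degenerate fiber, containing either a single $\C\vee\C$ or a single $\C$-component $F_1$ together with the $\C^*$-component $F_2$ of that fiber. In the first case the Abhyankar--Moh--Suzuki theorem straightens the two crossing lines to $\lbrace x=0\rbrace\cup\lbrace y=0\rbrace$, giving $f=x^iy^j$ (your $l=0$ case --- note you never explain why the two lines can be taken to be coordinate axes). In the second case one tracks the closures $\bar F_1$ and $\bar F_2$: $\bar F_2$ must meet $D_{-m}$, hence meets $C_0$ transversally in one point in the completion $[[0,\omega_1]]$; performing $\omega_1$ elementary transformations of type (\ref{makezero}) centered at $C_0\cap\bar F_2$ lands in $\P^1\times\P^1$ with $F_1=\lbrace x=0\rbrace$ and $\bar F_2$ meeting $\lbrace x=\infty\rbrace$ transversally in one point. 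Hence $F_2$ is the graph $\lbrace y=R(x)\rbrace$ of a rational function with pole only at $0$, and writing $R=Q(x)x^{-l}+P(x)$ and applying the automorphism $(x,y)\mapsto(x,y+P(x))$ gives $F_2=\lbrace x^ly=Q(x)\rbrace$. This replaces your blow-up bookkeeping with a short normal-form argument and is where the bound $\deg Q<l$ actually comes from; your coprimality remark via connectedness of the general fiber is fine.
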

\begin{proof}
Since the dual graph of a standard completion is $[[0,0]]$, and hence not as the ones in Proposition \ref{cs-fib-ds} there is a pseudo-minimal SNC-completion
$Y$ as in Proposition \ref{cs-fib} with
\begin{center}
\begin{tikzpicture}
\node[left] at (0,0) {$\Gamma_Y = \quad\quad$};
\draw (0,0)--(0.2,0);
\draw[dotted] (0.2,0)--(1,0);
\draw (0.8,0) -- (5.2,0);
\draw[dotted] (5,0)--(5.8,0);
\draw (5.8,0)--(6,0);
\draw[fill] (0,0) circle(0.05) node[above]{$\eta_{-m}$} node[below]{$D_{-m}$};
\draw[fill] (1,0) circle(0.05) node[above]{$\eta_{-2}$} node[below]{$D_{-2}$};
\draw[fill] (2,0) circle(0.05) node[above]{$0$} node[below]{$D_{-1}$};
\draw[fill] (3,0) circle(0.05) node[above]{$0$} node[below]{$D_0$};
\draw[fill] (4,0) circle(0.05) node[above]{$\eta_1$} node[below]{$D_1$};
\draw[fill] (5,0) circle(0.05) node[above]{$\eta_2$} node[below]{$D_2$};
\draw[fill] (6,0) circle(0.05) node[above]{$\eta_n$} node[below]{$D_n$};
\end{tikzpicture}.
\end{center}
Since $\chi(\C^2)=1$ there is precisely one $\C$ or one cross of two lines $\C\vee\C$ inside a fiber (say $f^{-1}(0)$) of $f$. If it is $\C\vee\C$ then by
the Abhyankar-Moh-Suzuki theorem we might assume that the zero set of $f$ is $\lbrace x = 0\rbrace \cup \lbrace y=0 \rbrace$. Hence $f$ is of the form
$x^iy^j$
($l=0$). If
it is a $\C$ component (say $F_1$) then it is attached say to one of the curves $D_2,\ldots,D_n$ and let $F_2$ be the $\C^*$ component of this fiber. By the
absence of other $\C$ components we know that $\bar F_2$ intersects $D_{-m}$ since otherwise $D_{-m}\cup\ldots\cup D_{-2}$ would contain a (-1)-curve. 
By Proposition \ref{cs-fib}(2) we get another completion $X$ of $S$ with
\begin{center}
\begin{tikzpicture}
\node[left] at (0,0) {$\Gamma_X = \quad\quad$};
\draw (0,0)--(1,0);
\draw[fill] (0,0) circle(0.05) node[above]{$0$} node[below]{$C_0$};
\draw[fill] (1,0) circle(0.05) node[above]{$\omega_1$} node[below]{$C_1$};
\end{tikzpicture}.
\end{center}
It is obtained from $Y$ such that $\bar F_1$ is disjoint from $C_0$ and $D_{-m}$ maps isomorphically onto $C_0$. Thus $\bar F_2$ still intersects $C_0$
transversally in one point. We continue by blowing up the point $C_0\cap \bar F_2$ and blowing down the strict transform of $C_0$, which is a modification of
type (\ref{makezero}). Repeating this $\omega_1$-times we will end up with a completion isomorhic to $\P^1\times\P^1$ such that $\overline F_2$ intersects
$\lbrace x = \infty \rbrace$ transversally in one point, and $\overline F_1\cap \lbrace
x=\infty \rbrace=\emptyset$. Hence in these coordinates we may assume that $F_1 = \lbrace x=0\rbrace$ and $F_2 = \lbrace y=R(x)\rbrace$ is a graph for a 
rational
function $R$ with a pole at $0$. Let us write $R$ as $R(x)=Q(x)x^{-l} + P(x)$ for some polynomials $Q$ and $P$. Then after a coordinate change given by
$(x,y)\mapsto(x,y+P(x))$ we get that $F_2 =\lbrace x^ly = Q(x)\rbrace$, and thus the
claim follows.
\end{proof}

\section{$\C$- and $\C^*$-fibrations on smooth Danielewski surfaces}
Danielewski surfaces form a subfamily of Gizatullin surfaces. They have an explicit description as a hypersurface in $\C^3$ and the classification of $\C$-
and $\C^*$- fibrations can be done very explicit. In most cases the classification looks exactly the same as the classification of $\C$- and $\C^*$- fibrations
on $\C^2$. It is a direct consequence of the famous Abhyankar-Moh-Suzuki theorem that all $\C$-fibrations are up to an automorphism given by the projection to 
the $x$-coordinate. Actually, this classification has already been found by Gutwirth \cite{Gu}. Proposition \ref{cs-c2} is the description of 
$\C^*$-polynomials on $\C^2$. 
\begin{definition}
 A smooth affine surface $S$ in called \textit{Danielewski surface}\footnote{In the literature surfaces given by $\lbrace x^n y=p(z)\rbrace$ are often also 
called Danielewski surfaces. In this text we only consider the case $n=1$.} if there is an SNC-completion $X$ such that $\Gamma_X = [[0,0,-k]]$ for $k\geq 2$. 
Danielewski
surfaces can also be seen as surfaces in $\C^3$ given by the equation $\lbrace xy=p(z)\rbrace$ for a polynomial $p$ of degree $k$ with simple zeros. 
\end{definition}
Let $p$ be a polynomial of degree $k$ with simple zeros. Given the surface $S=\lbrace xy = p(z) \rbrace \subset
\C^3$ it is easy to construct a standard completion. The projection $\pi(x,y,z) = (x,z)$ is a birational map from $S$ to $\C^2$, it is an isomorphism on the
open sets $\lbrace x \neq 0 \rbrace$ and it contracts the lines $\lbrace x = 0, z=z_i \rbrace$ onto the points $(0,z_i)$ where the numbers $z_i$ are the zeros
of $p$. So $S$ is isomorphic to an open set in $\C^2$ blown up in the points $(0,z_i)$ and therefore $\P^1\times\P^1$ blown up in these points is a completion
$X_0$ of $S$. The curve 
$$D_0=X_0\setminus S = \widehat{\lbrace x = \infty\rbrace}\cup\widehat{\lbrace z=\infty\rbrace}\cup\widehat{\lbrace x=0\rbrace}$$ 
is the boundary with dual graph $\Gamma_{X_0}
=[[0,0,-k]]$ (where $\hat C$ denotes the strict transform of a curve $C$). Moreover, the projection to the $x$ (resp. $z$) coordinate corresponds to the map
$\phi_0$ (resp. $\phi_1$) constructed in the previous section
and therefore the map $\pi$ corresponds to the map $\phi$.

On the other hand, given any standard completion of a Danielewski surface $S$, its corresponding map $\phi$ will describe a way to embed $S$ into $\C^3$. 
Indeed, $S$ is given in $\C^3$ by the equation $xy=p(z)$, when the polynomial $p$ is defined such that its zeros are the indeterminacy points of $\phi^{-1}$).

We begin with the description of $\C$-fibrations on $S$:
\begin{proposition}[\cite{ML,Daigle,FKZ,BD}]\label{uniquecompletion}
 Let $f: S = \lbrace xy=p(z)\rbrace \rightarrow \C$ be a $\C$-fibration. 

(1) Up to automorphism of $S$ the fibration $f$ is given by the projection $f(x,y,z)=x$.

(2) Any standard completion of $S$ is isomorphic to the standard completion $X_0$ constructed above.
\end{proposition}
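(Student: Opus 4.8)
The plan is to derive both statements from three ingredients already at hand: the forced shape of the dual graph of a standard completion (Proposition \ref{standardcompletion}(1)), the correspondence between $\C$-fibrations and standard completions (Corollary \ref{cor-c-fib}), and the fact---extractable from \cite{FKZ}, see the remark following Corollary \ref{cor-c-fib}---that a standard completion of a Danielewski surface is determined up to isomorphism by its dual graph.

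First I would prove (2). Let $X$ be an arbitrary standard completion of $S$. By Proposition \ref{standardcompletion}(1) the dual graph $\Gamma_X$ agrees, up to reversion, with that of the explicit standard completion $X_0$ constructed above, which is $[[0,0,-k]]$. Reversing $[[0,0,-k]]$ yields $[[-k,0,0]]$, the same weighted linear graph read in the opposite direction; thus $[[0,0,-k]]$ is reversion-invariant and $\Gamma_X=\Gamma_{X_0}$. Since for a Danielewski surface a standard completion is determined up to isomorphism by its dual graph, it follows that $X\cong X_0$, which is exactly (2).

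Then (1) follows quickly. For any standard completion the fibration $\phi_0$ is a $\C$-fibration: $C_0=\phi_0^{-1}(\infty)$ is irreducible, $C_1$ is a section, and a generic fibre of $\bar\phi_0$ is a rational curve meeting the boundary transversally in the single point lying on $C_1$ (the feathers all sit in $\phi_0^{-1}(0)$), hence is isomorphic to $\C$. So Corollary \ref{cor-c-fib} applies and, by (2), there is up to automorphism of $S$ a unique $\C$-fibration on $S$. Now the coordinate $x\in\C[S]$ is itself a $\C$-fibration---for $c\neq 0$ the fibre $\{x=c\}=\{(c,p(z)/c,z):z\in\C\}$ is a line, so $x$ is a reduced regular function with general fibre $\C$ (under the birational morphism $\pi(x,y,z)=(x,z)$ it is just the first projection)---hence every $\C$-fibration on $S$ coincides, after composition with a suitable automorphism of $S$, with the fibration given by $x$.

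The step I expect to be the real obstacle is the uniqueness of the standard completion of a Danielewski surface. This is the only input from outside the present text, and it is exactly where the special geometry of $S=\{xy=p(z)\}$ enters: that $p$ has simple zeros corresponds to all feathers of $X_0$ being $(-1)$-curves attached to $C_2$ at $k$ distinct points, and such configurations leave no freedom. If one prefers a self-contained route, (1) can be read off directly from the Abhyankar--Moh--Suzuki theorem---this is already due to Gutwirth \cite{Gu}---and then (2) is recovered from Corollary \ref{cor-c-fib} and Proposition \ref{standardcompletion}(1) as above.
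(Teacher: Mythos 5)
Your proposal is correct and follows essentially the paper's own route: the paper likewise reduces to the equivalence of (1) and (2) via Corollary \ref{cor-c-fib} and outsources the substantive content to \cite{ML} and \cite{FKZ}, which is exactly the external input you isolate (a standard completion of $S$ is determined up to isomorphism by its dual graph, plus the reversion-invariance of $[[0,0,-k]]$). One caveat on your optional ``self-contained'' alternative: the Abhyankar--Moh--Suzuki theorem classifies $\C$-fibrations on $\C^2$, not on $S$ itself, so (1) cannot be read off from it directly---on $S$ that statement is the theorem of Makar-Limanov and Daigle (the Gutwirth reference in the paper concerns $\C^2$).
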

\begin{proof}
By Corollary \ref{cor-c-fib} (1) is equivalent to (2). There are several proofs, e.g. (1) is proven in \cite{ML}
and (2) is proven in \cite{FKZ}.
\end{proof}

\subsection{$\C^*$-fibrations with two sections at the boundary}
The description of $\C^*$-fibration with two sections at the boundary is very much related with the description of $\C^*$-fibrations on $\C^2$. We
will prove the following proposition:
\begin{proposition}
 Let $f: S=\lbrace xy=p(z) \rbrace \rightarrow \C$ be a $\C^*$-fibration with two sections at the boundary. Then $f$ is up to isomorphism of $S$ of the form
$z$ or $x^i(x^l(z+a) + Q(x))^j$ for $i,j$ relatively prime, $l\in\N_0$, $\deg(Q)<l$ and $a\in \C$.
\end{proposition}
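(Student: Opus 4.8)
The plan is to follow the strategy of Proposition~\ref{cs-c2} as closely as possible, using Proposition~\ref{cs-fib} to reduce to a normal form and then exploiting the explicit embedding $S=\{xy=p(z)\}\subset\C^3$. First I would invoke Proposition~\ref{uniquecompletion}(2): the standard completion of $S$ is the unique $X_0$ with $\Gamma_{X_0}=[[0,0,-k]]$, which is \emph{not} one of the graphs $[[0,0,-4]]$, $[[0,0,-3,-3]]$ or $[[0,0,-3,-2,\ldots,-2,-3]]$ appearing in Proposition~\ref{cs-fib-ds} (for the last one we need $k\geq 3$ and its shape is forced; for $k=4$ one checks $[[0,0,-4]]$ is still different from $[[0,0,-4]]$... — here I must be slightly careful: $[[0,0,-4]]$ \emph{is} one of the forbidden graphs, so the double-section case genuinely occurs only for $k=4$, but that case is handled separately in part (3) of the Main Theorem and is excluded here by the hypothesis ``two sections at the boundary''). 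Thus a $\C^*$-fibration with two sections at the boundary does admit a pseudo-minimal SNC-completion $Y$ of the linear form described in Proposition~\ref{cs-fib}(1), and by part~(2) it comes from an $\omega_1$-semistandard completion $X$ with $\Gamma_X=[[0,\omega_1,-k]]$; combined with Proposition~\ref{uniquecompletion}(2) and the reversion discussion, $\omega_1$ is forced, and I would identify $\phi_0=x$, $\phi_1=z$.

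Next I would run the Euler-characteristic count. Since $\chi(S)=k-1$ (the surface $S$ is $\C^2$ blown up at $k$ points along a line minus the strict transform of that line, or directly: the general fiber of $x$ is $\C$ for $x\neq 0$ and $x^{-1}(0)$ is a disjoint union of $k$ lines, so by Proposition~\ref{fibrations}(b) $\chi(S)=\chi(\C\times\C)+(k-1)\cdot(\chi(\C\sqcup\cdots\sqcup\C)-\chi(\C)) = 1+(k-1)\cdot 0$ — no, that's $\chi=1$, wrong; I should instead compute $\chi(S)=k-1$ via the count of feathers, or directly $\chi(S)$ using $xy=p(z)$). In any case, the number of feathers equals $\chi(S)$. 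The $\C^*$-fibration $f$ has some number of feathers; the point is that if all feathers sit over a single fiber one gets, after the modification~(\ref{movezeros}) and~(\ref{makezero}), coordinates on $\P^1\times\P^1$ in which one component of that singular fiber is $\{x=0\}$ and the $\C^*$-component is a graph $\{y=R(x)\}$ for a rational function $R$ with a pole only at $0$, exactly as in the proof of Proposition~\ref{cs-c2}. Writing $R(x)=Q(x)x^{-l}+P(x)$ and absorbing $P$ by an automorphism of the form $(x,y,z)\mapsto(x,y,z+P(x))$ of $S$ (this is an automorphism because $x(y+\text{something})$... — actually the right automorphism is the flow of $\mathrm{SF}^x$-type, i.e. $z\mapsto z+P(x)$, $y\mapsto y+(p(z+P(x))-p(z))/x$, which is polynomial), one reduces $f$ to $x^i(x^l z - Q(x))^j$, up to replacing $z$ by $z+a$. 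The degenerate subcase where the distinguished fiber is a cross $\C\vee\C$ rather than $\C\sqcup\C^*$-with-feathers corresponds to $f=z$ (the fibration whose fibers are the conics $\{xy=c\}$), the analogue of $x^iy^j$ on $\C^2$.

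The main obstacle I anticipate is \textbf{controlling where the feathers lie}: on $\C^2$ there is exactly one feather ($\chi(\C^2)=1$), so the ``one $\C$-component in one singular fiber'' picture is automatic, whereas on $S$ there are $k-1$ feathers and a priori they could be distributed over several fibers, or several could lie over the same fiber but attached to different boundary curves $D_2,\dots,D_n$. I would argue that, because the completion $Y$ from Proposition~\ref{cs-fib}(2) is obtained from the $\omega_1$-semistandard completion with $\Gamma_X=[[0,\omega_1,-k]]$ by \emph{inner} blow ups only followed by~(\ref{movezeros}), the tail $[[\eta_{-m},\ldots,\eta_{-2}]]$ on the left of the two zeros must be contractible to the single vertex $-k$ (since that is the only nonzero vertex of $\Gamma_X$), which pins down $m$ and the $\eta_i$; in particular there is \emph{no} room on the left side for extra $\C$-components, so all but the one relevant $\C$-component (the feather $\bar F_1$) and the $\C^*$-component $\bar F_2$ of the chosen fiber must actually be pushed to the right side and then, after the further modifications of type~(\ref{makezero}), disappear into the $\P^1\times\P^1$ picture as the other $k-2$ feathers attached to $\{x=\infty\}$-side points — wait, that's not right either. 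The cleaner argument: the $\C^*$-component $\bar F_2$ must meet $D_{-m}$ (else $D_{-m}\cup\cdots\cup D_{-2}$ would contain a $(-1)$-curve, contradicting the shape), exactly as in Proposition~\ref{cs-c2}; having fixed that, I perform the modification of Proposition~\ref{cs-fib}(2) to get $\Gamma_X=[[0,\omega_1,-k]]$ with $D_{-m}\mapsto C_0$, so $\bar F_2$ still meets $C_0$ transversally, and $\bar F_1$ is disjoint from $C_0$; then I do $\omega_1$ further blow-ups/blow-downs of type~(\ref{makezero}) at the moving intersection point of $C_0$ with $\bar F_2$ to reach $\P^1\times\P^1$, and the remaining $k-2$ feathers are simply absorbed into the $(-k)$-curve becoming $\widehat{\{x=0\}}$ — they are the lines $\{x=0,z=z_i\}$ of the standard embedding. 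So the distribution of feathers over fibers is not actually an obstruction once one uses that they are all attached to the single tail curve of weight $-k$; the essential content is identical to the $\C^2$ case. I would therefore present the proof as: reduce to the $\C^2$-type normal form via Propositions~\ref{cs-fib} and~\ref{uniquecompletion}, then copy the endgame of Proposition~\ref{cs-c2} verbatim, with the single extra bookkeeping point that the polynomial $p$ (equivalently the positions $z_i$) is determined by the indeterminacy points of $\phi^{-1}$ and is unchanged under the coordinate changes used.
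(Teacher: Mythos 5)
Your overall skeleton is the same as the paper's: pass to the semistandard completion of Proposition \ref{cs-fib}, use the uniqueness of the standard completion (Proposition \ref{uniquecompletion}) to land in $\P^1\times\P^1$, and then exploit the $\C^2$ classification of Proposition \ref{cs-c2}. But the execution has a genuine gap, and it sits exactly where you flag ``the main obstacle''. You propose to re-run the internal component-tracking of Proposition \ref{cs-c2} on $S$ itself: find the distinguished fiber with components $F_1\cong\C$ and $F_2\cong\C^*$ and show that $\bar F_2$ meets $D_{-m}$. Your justification (``else $D_{-m}\cup\cdots\cup D_{-2}$ would contain a $(-1)$-curve'') is copied from the proof of \ref{cs-c2}, where it rests on $\chi(\C^2)=1$, i.e.\ on the \emph{absence of any other} $\C$-component. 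On $S$ there are $\chi(S)=k$ such components (not $k-1$ as you first compute), and nothing you say excludes feathers attached to the left tail $D_{-m},\ldots,D_{-2}$; such feathers are $(-1)$-curves inside that fiber of $\bar f$ and let it contract without any $D_{-i}$ becoming a $(-1)$-curve first and without $\bar F_2$ touching $D_{-m}$. Your first attempted fix (contractibility of the left tail inside $\Gamma_Y$) concerns only the boundary graph and says nothing about feathers, which is presumably why you retract it mid-sentence; the ``cleaner argument'' you substitute is the one whose hypothesis fails. A secondary issue: the lift $z\mapsto z+P(x)$, $y\mapsto y+(p(z+P(x))-p(z))/x$ is polynomial only when $x\mid P$; the constant term of $P$ does \emph{not} lift to $S$, which is precisely why the parameter $a$ in $z+a$ survives in the statement, so your parenthetical ``which is polynomial'' is false as written even though you keep the $a$ at the end.

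The paper sidesteps all of this bookkeeping by pushing forward the \emph{function} rather than tracking fiber components. From the contractibility of $[[\eta_{-m},\ldots,\eta_{-2},\eta_1,\ldots,\eta_n]]$ to $[[-k]]$ one gets $\eta_1=-1$ and $n\geq 2$ (unless $X=Y$, which is the case $f=z$ via $\phi_1$), so $C_2$ is the strict transform of $D_n$ and lies in a finite fiber; hence $\bar f$ is constant and non-polar on $C_2$ and on the feathers, and $g=\phi_*\bar f|_{\C^2}$ is an honest $\C^*$-polynomial on $\C^2$ vanishing on $\lbrace x=0\rbrace$. Proposition \ref{cs-c2} is then applied as a black box, and the only remaining work is to decide which normalizing automorphisms of $\C^2$ lift to $S$. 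Note also that your identification of the ``cross'' case with $f=z$ is not justified: on $S$ a fiber can be a cross together with extra $\C$-components, and the paper's dichotomy is instead $X=Y$ versus $X\neq Y$. If you replace your component-tracking endgame by this pushforward argument, the proof closes.
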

\begin{proof}
 Let $X\supset S$ be the semistandard completion from Proposition \ref{cs-fib} with
\begin{center}
\begin{tikzpicture}
\node[left] at (0,0) {$\Gamma_X = \quad\quad$};
\draw (0,0)--(2,0);
\draw[fill] (0,0) circle(0.05) node[above]{$0$} node[below]{$C_0$};
\draw[fill] (1,0) circle(0.05) node[above]{$\omega_1$} node[below]{$C_1$};
\draw[fill] (2,0) circle(0.05) node[above]{$-k$} node[below]{$C_2$};
\end{tikzpicture}
\end{center}
that is obtained (starting by moving the zeros to the left followed by inner blow downs) from a pseudo-minimal SNC-completion $Y$ with
\begin{center}
\begin{tikzpicture}
\node[left] at (0,0) {$\Gamma_Y = \quad\quad$};
\draw (0,0)--(0.2,0);
\draw[dotted] (0.2,0)--(1,0);
\draw (0.8,0) -- (5.2,0);
\draw[dotted] (5,0)--(5.8,0);
\draw (5.8,0)--(6,0);
\draw[fill] (0,0) circle(0.05) node[above]{$\eta_{-m}$} node[below]{$D_{-m}$};
\draw[fill] (1,0) circle(0.05) node[above]{$\eta_{-2}$} node[below]{$D_{-2}$};
\draw[fill] (2,0) circle(0.05) node[above]{$0$} node[below]{$D_{-1}$};
\draw[fill] (3,0) circle(0.05) node[above]{$0$} node[below]{$D_0$};
\draw[fill] (4,0) circle(0.05) node[above]{$\eta_1$} node[below]{$D_1$};
\draw[fill] (5,0) circle(0.05) node[above]{$\eta_2$} node[below]{$D_2$};
\draw[fill] (6,0) circle(0.05) node[above]{$\eta_n$} node[below]{$D_n$};
\end{tikzpicture}
\end{center}
with $\eta_i\leq -2$ for $|i|\geq 2$ and $\eta_{-1}\leq -1$. Since $[[\eta_{-m},\ldots,\eta_{-2},\eta_1,\ldots,\eta_n]]$ is contractible to $[[-k]]$ such
that the right endvertex is not contracted we have $\eta_1=-1$ and thus $n\geq 2$ unless $X=Y$. We may extend $f: S\rightarrow \C$ to a rational function $\bar
f: X \dashrightarrow \P^1$. If $X=Y$ then $f(x,y,z)=z$ up to isomorphism. Indeed, by
Proposition \ref{uniquecompletion} the completion $X=Y$ is isomorphic to  $X_0$, and $\bar f:X\cong X_0\rightarrow \P^1$ coincides with $\phi_1$ which is the
projection to the
$z$-coordinate. If $X\neq Y$ then by construction $\bar f$ is constant and non-polar on $C_2\setminus C_1$ (assume that $f$ vanishes on $C_2\setminus C_1$). 
Indeed, $C_2$ is the strict
transform of $D_{n}$ which sits inside a fiber (since $n\geq2$). The same holds true if we pass by modifications (\ref{makezero}) to a standard completion $X'$.
Hence the
pushforward $\phi_* \bar f$ by the morphism $\phi: X'\cong X_0 \rightarrow\P^1\times\P^1$ restricted to $\C^2$ is a regular function $g:=\phi_*\bar
f \vert_{\C^2}: \C^2 \rightarrow \C$. In particular, $g$ is a polynomial function on $\C^2$ with general fibers isomorphic to $\C^*$ and $\lbrace x = 0 \rbrace
\subset g^{-1}(0)$. By Proposition \ref{cs-c2} the function $g$ and hence its pullback $f$ is, for some automorphism $(s,t)$ of $\C^2$, of the form
$s(x,z)^i(s(x,z)^lt(x,z) - Q(s(x,z)))^j$
with $i,j,l,Q$ as desired. Clearly we have that (if $l=0$ then maybe after exchanging $s$ and $t$) the zero set of
$s(x,z)$ coincides with $\lbrace x=0\rbrace$. Hence the automorphism is of the form
$s(x,z)=ax$ and $t(x,z)=by+r(x)$, and after rescaling $f$ we may assume $a=b=1$. Since automorphisms of $\C^2$ of the form $(x,z)\mapsto(x,z+xr'(x))$ extend to
the surface $S$ we may even assume that $s(x,z)=x$ and $t(x,z)=z+a$ for some $a\in\C$ and the claim follows.
\end{proof}

\subsection{$\C^*$-fibrations with one double-section at the boundary}
By Proposition \ref{cs-fib-ds} the case of a $\C^*$-fibration with a double-section at the boundary on a Danielewski surface only occurs when the polynomial
$p$ is of degree $4$. It will be more convenient to
allow completions where the components of the boundary do not necessarily intersect transversally. 

\begin{lemma}\label{lem-cs-dan}
 Let $X$ be a non-SNC-completion of $S=\lbrace xy=a(z-z_1)(z-z_2)(z-z_3)(z-z_4)\rbrace$ such that $X\setminus S = C_0 \cup C_1$ with $C_0\cdot C_1 =2$,
$C_0\cdot C_0=0$ and $C_1\cdot C_1 = 1$. Then:

(1) $X$ can be identified with $\P^2$ blown up in $[z_i^2:z_i:1]$ for $1\leq i\leq 4$ such that
\[
 C_0=\widehat{\lbrace w=0\rbrace} \quad \mathrm{and} \quad C_1=\widehat{\lbrace uw = v^2\rbrace}.
\]

(2) There is a unique (up to affine transformation of $\P^1\setminus\lbrace\infty\rbrace$) rational function $h:X\rightarrow \P^1$ such that $C_0$ is a
double-section and $C_1 =
h^{-1}(\infty)$.
The pushforward $\tilde h$ of $h$ to $\P^2$ is given by 
\[
 \tilde h([u:v:w]) = \frac{(u-(z_1 + z_2)v+ z_1z_2w)(u-(z_3 + z_4)v+ z_3z_4w)}{uw-v^2}.
\]
Moreover, $h$ has at least three fibers which are not isomorphic to $\C^*$.
\end{lemma}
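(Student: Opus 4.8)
The plan is to replace the abstract completion $X$ by a concrete blow-up of $\P^2$, to read off $h$ from a pencil of conics on that model, and to count fibers via Euler characteristic. For part (1) I would first pin down the boundary combinatorics: the first Betti number of the dual graph of an SNC-completion of a Gizatullin surface is invariant under the modifications $(O),(O^{-1}),(I),(I^{-1})$, hence the same for every SNC-completion, and it vanishes because $S$ has a linear standard completion (Proposition \ref{standardcompletion}); therefore $C_0$ and $C_1$ cannot meet at two distinct transverse points, so $C_0\cdot C_1=2$ forces a single point of contact order $2$. Resolving that point by two blow ups, exactly as in the proof of Proposition \ref{cs-fib-ds}, yields an SNC-completion whose dual graph must, by $\deg p=4$ and self-intersection bookkeeping, be the one in Proposition \ref{cs-fib-ds} with standard graph $[[0,0,-4]]$; since that completion of a Danielewski surface is rigid (Proposition \ref{uniquecompletion}), $X$ is determined up to isomorphism. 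It then suffices to exhibit one model: in $\P^2$ with coordinates $[u:v:w]$ take the smooth conic $Q=\{uw=v^2\}$ and its tangent line $L=\{w=0\}$, blow up the four points $P_i=[z_i^2:z_i:1]\in Q$ (the $z_i$ being the zeros of $p$), and verify that the self-intersections and intersection number of $\widehat Q$ and $\widehat L$ are as prescribed. In the chart $w=1$ the map
\[
(u,v)\longmapsto(x,y,z)=\bigl(u-v^2,\ p(v)/(u-v^2),\ v\bigr)
\]
identifies $\{x\ne 0\}\subset S$ with $\P^2\setminus(Q\cup L)$ away from the $P_i$; because the factor $(z-z_i)$ in $p$ cancels the simple zero of $x$ along the $i$-th exceptional curve, $y=p(z)/x$ stays regular there, so the identification extends to $S\cong\mathrm{Bl}_{P_1,\dots,P_4}\P^2\setminus(\widehat Q\cup\widehat L)$, which by the rigidity above is $X$.

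For part (2), a rational function on this model with pole divisor the boundary component of self-intersection $0$ comes from the pencil of conics through $P_1,\dots,P_4$ spanned by $Q$ and a reducible conic; the line through $P_i$ and $P_j$ has equation $u-(z_i+z_j)v+z_iz_jw=0$, and restricted to the coordinate $z$ on $Q$ it is $(z-z_i)(z-z_j)$. Blowing up the four base points makes the pencil base-point-free, so $\tilde h=\ell_{12}\ell_{34}/(uw-v^2)$ descends to a morphism $h$ whose fiber over $\infty$ is $\widehat Q$, while $h|_{\widehat L}$ equals $-(s-(z_1+z_2))(s-(z_3+z_4))$ in the coordinate $s=u/v$, a degree-two map, so $\widehat L$ is a double-section. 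On $S$, using $xy=p(z)$, one gets
\[
h|_S=x+\tfrac{y}{a}+(z-z_1)(z-z_2)+(z-z_3)(z-z_4),
\]
so the three pairings of the $z_i$ yield the same function up to an additive constant, i.e. the same fibration; this also matches the polynomial of case (3) of the Main Theorem up to a constant. Uniqueness up to affine reparametrization of $\P^1\setminus\{\infty\}$ follows because a competing $h'$ shares the pole divisor, hence (by Proposition \ref{fibrations}(d), which gives $\widehat Q$ as a regular fiber of a $\P^1$-fibration) is a polynomial in $h$, and the double-section condition forces that polynomial to have degree $1$.

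Finally, the fiber count uses Proposition \ref{fibrations}(b): the general fiber $F\cong\C^*$ has $\chi(F)=0$, so $\chi(S)=\sum_{F'}\chi(F')$ over the singular fibers, while $\chi(S)=\deg p=4$ (project to $x$: general fiber $\C$, one reducible fiber of $\deg p$ lines). Reading the fibers off the pencil: the four base points lie on a smooth conic, so no three are collinear, hence the pencil has exactly three singular members $\ell_{12}\ell_{34},\ell_{13}\ell_{24},\ell_{14}\ell_{23}$, each contributing a fiber $\cong\C\vee\C$ or $\C\sqcup\C$; a short computation shows $C_t$ is tangent to $L$ only for $t=\tfrac14(z_1+z_2-z_3-z_4)^2$, yielding at most one further singular fiber. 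Comparison with $\sum\chi(F')=4$ then forces exactly $3$ or $4$ singular fibers, in particular at least $3$. The step I expect to be the main obstacle is the rigidity in part (1) — excluding the two-transverse-point case and verifying that undoing the SNC-resolution by the two blow-downs is unambiguous, so that the boundary data really pins down $X$; the remaining verifications are routine coordinate computations, the only delicate bookkeeping being to keep the three chord-pairings consistent after restriction to $S$.
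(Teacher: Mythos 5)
Your overall route coincides with the paper's: both identify $X$ with $\P^2$ blown up at the four points $[z_i^2:z_i:1]$ on the conic $\lbrace uw=v^2\rbrace$, both obtain $h$ from the pencil of conics through these points, and both extract the three non-$\C^*$ fibers from the three reducible members $\ell_{12}\ell_{34}$, $\ell_{13}\ell_{24}$, $\ell_{14}\ell_{23}$. Your part (2) is sound — your uniqueness argument (a competing $h'$ is constant on the complete fibers of $h$, hence a polynomial in $h$, and the double-section degree forces that polynomial to be affine) is a legitimate variant of the paper's dimension count for conics through four points — and your exclusion of two transverse intersection points via the first Betti number of the dual graph cleanly justifies a step the paper leaves implicit. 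The chord equations, the restriction $h|_S=x+y/a+(z-z_1)(z-z_2)+(z-z_3)(z-z_4)$, and the Euler-characteristic bookkeeping all check out.

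The genuine gap is exactly the step you flag as the main obstacle: the sentence ``since that completion of a Danielewski surface is rigid (Proposition \ref{uniquecompletion}), $X$ is determined up to isomorphism'' does not follow from what you cite. Proposition \ref{uniquecompletion}(2) gives uniqueness of the \emph{standard} completion. To pass from the standard completion back to the SNC-resolution $\tilde X$ of $X$ one must perform an \emph{outer} blow-up: in the branched graph of Proposition \ref{cs-fib-ds} the degree-one $(-1)$-vertex (the curve that becomes $C_1$) can only be removed by an outer blow-down, so recreating it requires choosing a blow-up center on a boundary component, and that center ranges over a one-parameter family ($\P^1$ minus the double points). By part (2) of the Observation of Zariski only modifications \emph{without} outer blow-ups determine the birational map uniquely, so a priori you obtain a one-parameter family of completions $X$ with the given numerical data, and exhibiting one model does not yet cover them all. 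The paper closes precisely this gap by explicit computation: it realizes the whole family via the birational maps $(x,z)\mapsto[x+az^2:z:1]$, $a\in\C^*$, and then absorbs the parameter by the projective rescaling $[u:v:w]\mapsto[a^{-1}u:v:w]$. In your setup the same repair is available — for instance, check that the $\C^*$-action $(x,y,z)\mapsto(\lambda x,\lambda^{-1}y,z)$ on $S$ extends to the standard completion and acts transitively on the admissible centers of the outer blow-up — but some such argument must be supplied before ``it suffices to exhibit one model'' is valid.
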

\begin{proof}
 The completion $X$ may be transformed into a standard completion by the following modifications:
\begin{center}
 \begin{tikzpicture}
    \draw (0,-1)--(0,1) node[left]{$1$};
  \draw[very thick] (2,1) to  [out=185,in=90] (0,0) to [out=270,in=175] (2,-1) node[above] at (1.7,1){$C_1$} node[above] at (1.7,-1){$0$};
\node at (2.7,0.2) {$\stackrel{(I)}{\rightsquigarrow}$};
\pgftransformxshift{4.5cm};
 \draw[very thick] (-0.3,0)--(1.7,0) node[below] {$-1$};
 \draw (-1.2,-1.2)--(0.2,0.2) node[left] at (-0.6,-0.6){$0$};
\draw (0,1.7)--(0,-0.3) node[left] at (0,1.5){$-1$};
\node at (2.7,0.2) {$\stackrel{(I)}{\rightsquigarrow}$};
\pgftransformxshift{4.5cm};
 \draw[very thick] (0.3,-0.2)--(1.7,-0.2) node[below] {$-2$};
 \draw (-1,-1)--(0.2,0.2) node[left] at (-0.6,-0.6){$-1$};
\draw (-0.2,1.7)--(-0.2,0.3) node[left] at (-0.2,1.5){$-2$};
\draw (-0.4,0.6)--(0.6,-0.4) node[right] at (0.1,0){$-1$};
\node at (2.7,0.2) {$\stackrel{(I)}{\rightsquigarrow}$};
 \end{tikzpicture}
\end{center}
\begin{center}
 \begin{tikzpicture}
   \draw[very thick] (1.2,-0.7)--(2.6,-0.7) node[below] {$-3$};
 \draw (-0.6,-0.6)--(0.2,0.2) node[left] at (-0.3,-0.3){$-1$};
\draw (-0.2,1.7)--(-0.2,0.3) node[left] at (-0.2,1.5){$-2$};
\draw (-0.4,0.6)--(0.6,-0.4) node[right] at (0.1,0){$-2$};
\draw (0.3,-0.3) to [out=0,in=130] (1.5,-0.8) node[above right] at (1,-0.6){$-1$};
    \node at (3.2,0.2) {$\stackrel{(I)}{\rightsquigarrow}$};
\pgftransformxshift{5cm};
   \draw[very thick] (2.1,-1.1)--(3.5,-1.1) node[below] {$-4$};
 \draw (-0.6,-0.6)--(0.2,0.2) node[left] at (-0.3,-0.3){$-1$};
\draw (-0.2,1.7)--(-0.2,0.3) node[left] at (-0.2,1.5){$-2$};
\draw (-0.4,0.6)--(0.6,-0.4) node[right] at (0.1,0){$-2$};
\draw (0.3,-0.3) to [out=0,in=130] (1.5,-0.8) node[above right] at (1,-0.6){$-2$};
\draw (1.2,-0.7) to [out=0,in=130] (2.4,-1.2) node[above right] at (1.9,-1){$-1$};
\node at (4.2,0.2) {$\stackrel{(O^{-1})}{\rightsquigarrow}$};
 \end{tikzpicture}
\end{center}
\begin{center}
 \begin{tikzpicture}
  \draw[very thick] (2.1,-1.1)--(3.5,-1.1) node[below] {$-4$};
\draw[fill] (0.1,0.1) circle (0.05) node[below left] {$p$};
\draw (-0.2,1.7)--(-0.2,0.3) node[left] at (-0.2,1.5){$-2$};
\draw (-0.4,0.6)--(0.6,-0.4) node[right] at (0.1,0){$-1$};
\draw (0.3,-0.3) to [out=0,in=130] (1.5,-0.8) node[above right] at (1,-0.6){$-2$};
\draw (1.2,-0.7) to [out=0,in=130] (2.4,-1.2) node[above right] at (1.9,-1){$-1$};
\node at (3.5,0.2) {$\stackrel{(I^{-1})}{\rightsquigarrow}$};
\pgftransformxshift{3.7cm};
  \draw[very thick] (2.1,-0.3)--(3.5,-0.3) node[below] {$-4$};
\draw (0.5,1.7)--(0.5,0.3) node[left] at (0.5,1.5){$-1$};
\draw (0.3,0.5) to [out=0,in=130] (1.5,0) node[above right] at (1,0.2){$-1$};
\draw (1.2,0.1) to [out=0,in=130] (2.4,-0.4) node[above right] at (1.9,-0.2){$-1$};
\node at (3.9,0.2) {$\stackrel{(I^{-1})}{\rightsquigarrow}$};
\pgftransformxshift{3.2cm};
  \draw[very thick] (2.1,-0.3)--(3.5,-0.3) node[below] {$-4$};
\draw (1.4,1.3)--(1.4,-0.1) node[left] at (1.4,1.1){$0$};
\draw (1.2,0.1) to [out=0,in=130] (2.4,-0.4) node[above right] at (1.9,-0.2){$0$};
 \end{tikzpicture}
\end{center}
A calculation shows that the birational map $\P^1\times\P^1 \dashrightarrow \P^2$ given by $(x,z) \mapsto [u(x,z):v(x,z):w(x,z)]=[x+az^2:z:1]$ induces precisely
the inverse of this modification on the boundary (where $a$ corresponds to the point $p$). Thus $X$ can be identified with $\P^2$ blown up in $[az_i^2:z_i:1]$ 
for
$1\leq i\leq 4$ (indeed, the standard completion was isomorphic to $\P^1\times\P^1$ blown up in $(0,z_i)$ for $1\leq i\leq 4$). After the isomorphism
$[u:v:w]\mapsto
[a^{-1}u:v:w]$ the completion $X$ is as desired. For Claim (2) we observe that $\tilde h$ is of
degree 2, indeed the general fiber meets $\lbrace w=0\rbrace$ twice. Moreover, every fiber meets $\lbrace uw = v^2\rbrace$ precisely in the points
$[z_i^2:z_i:1]$ for $1\leq i\leq 4$. This holds since these points are indeterminacy points of $\tilde h$ because $C_1$ is an entire fiber of $h$. The 
space of 
curves of
degree 2 in $\P^2$ is isomorphic to $\P^5$ hence the space of curves of degree 2 passing through four points is isomorphic to $\P^1$. It coincides with the 
levels
of $\tilde h$. So, $\tilde h$ is (up to affine transformation of $\P^1\setminus\lbrace\infty\rbrace$) of the form $(uw-v^2)^{-1}g(u,v,w)$, where $g$ is any
homogeneous polynomial of degree 2 such that its zero set meets
$\lbrace uw = v^2\rbrace$ in the four requested points. We may choose the product of two linear functions each connecting two of the points linearly. Clearly 
$h$
has at least three fibers not isomorphic to $\C^*$ since there are three possibilities to choose two lines through these four points. 
\end{proof}

\begin{proposition}\label{prop-cs-ds}
 Let $f: S=\lbrace xy=p(z)\rbrace \rightarrow \C$ be a $\C^*$-fibration with double-section at the boundary. Then $\deg p = 4$ and $f$ is given up to
automorphism of $S$ by $f(x,y,z) = ax + y + \frac{1}{6} p''(z)$ where $a$ is the leading coefficient of $p$. Additionally, the fibration $f$ has at least three 
fibers not
isomorphic to $\C^*$.
\end{proposition}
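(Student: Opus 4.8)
The plan is to reduce the statement to Lemma~\ref{lem-cs-dan} and then pull back its explicit formula to the coordinates of $S\subset\C^3$.

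\emph{Degree of $p$.} Since $f$ has a double-section at the boundary, Proposition~\ref{cs-fib-ds} forces a standard completion of $S$ to have dual graph $[[0,0,-4]]$, $[[0,0,-3,-3]]$ or $[[0,0,-3,-2,\ldots,-2,-3]]$; but by Proposition~\ref{uniquecompletion}(2) every standard completion of $S=\{xy=p(z)\}$ has dual graph $[[0,0,-k]]$ with $k=\deg p$, so $\deg p=4$. Write $p(z)=a\prod_{i=1}^{4}(z-z_i)$ with the $z_i$ pairwise distinct.

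\emph{Reaching the model of Lemma~\ref{lem-cs-dan}.} I would take a pseudo-minimal SNC-completion $\bar X$ as in Proposition~\ref{cs-fib-ds}; for $k=4$ the boundary component $\bar f^{-1}(\infty)$ is the chain $[[-2,-1,-2]]$ and the double-section $C'$ is a $(-1)$-curve meeting its central vertex. Now I would contract $\bar f^{-1}(\infty)$ to a single curve while leaving $C'$ untouched: blow down the central $(-1)$-vertex (which meets $C'$) and then one of the two resulting $(-1)$-vertices. This produces an (in general non-SNC) completion $X$ with $X\setminus S=C_0\cup C_1$ in which $C_0=C'$ is the double-section, $C_1$ is the image of $\bar f^{-1}(\infty)$ and is still a whole fibre of the push-forward $\bar f$, and a direct self-intersection count gives $C_0\cdot C_1=2$ together with $\{C_0^2,C_1^2\}=\{0,1\}$ as in Lemma~\ref{lem-cs-dan}. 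That lemma then identifies $X$ with $\P^2$ blown up at $[z_i^2:z_i:1]$, and its uniqueness part shows that $\bar f$ — a rational function with $C_0$ a double-section and $C_1=\bar f^{-1}(\infty)$ — agrees, up to an affine change in the target and hence as a fibration, with the function $h$ whose push-forward to $\P^2$ is
\[
 \tilde h([u:v:w])=\frac{(u-(z_1+z_2)v+z_1z_2w)(u-(z_3+z_4)v+z_3z_4w)}{uw-v^2}.
\]
In particular $f$, like $h$, has at least three fibres not isomorphic to $\C^*$, which is the last assertion.

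\emph{Back to coordinates on $S$.} Composing the projection $(x,y,z)\mapsto(x,z)$ with the birational map $\P^1\times\P^1\dashrightarrow\P^2$ from the proof of Lemma~\ref{lem-cs-dan} (corrected by the rescaling carried out there) gives, in the chart $w=1$, the substitution $u=a^{-1}x+z^2$, $v=z$. Then $uw-v^2=a^{-1}x$ and each linear factor becomes $a^{-1}x+(z-z_i)(z-z_j)$, so, expanding and using $xy=p(z)$ — hence $(z-z_1)(z-z_2)(z-z_3)(z-z_4)=a^{-1}xy$ on $S$ — one finds $\tilde h|_S=a^{-1}x+y+(z-z_1)(z-z_2)+(z-z_3)(z-z_4)$. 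The quadratics $(z-z_1)(z-z_2)+(z-z_3)(z-z_4)$ and $\frac{1}{6a}p''(z)$ agree in degrees $2$ and $1$, so they differ by a constant; therefore $f$ coincides as a fibration with $a^{-1}x+y+\frac{1}{6a}p''(z)$, and precomposing with the automorphism $(x,y,z)\mapsto(ax,a^{-1}y,z)$ of $S$ (and rescaling) turns this into $ax+y+\frac{1}{6}p''(z)$, as claimed.

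The step I expect to be the main obstacle is the second one: matching the abstract pseudo-minimal SNC-completion furnished by Proposition~\ref{cs-fib-ds} with the concrete two-curve model of Lemma~\ref{lem-cs-dan} — that is, choosing the contraction so that the double-section survives, lands in the correct position relative to the fibre over $\infty$, and carries the right self-intersections, while checking that $\bar f$ descends with $C_1$ as a whole fibre. Once that identification is secured, the degree statement and the coordinate computation are routine.
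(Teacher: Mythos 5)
Your proposal is correct and follows essentially the same route as the paper: invoke Proposition~\ref{cs-fib-ds} to pin down $\deg p=4$ and the pseudo-minimal boundary, contract by two blow-downs to the two-curve model of Lemma~\ref{lem-cs-dan}, and then push the explicit formula for $\tilde h$ back through $(x,z)\mapsto[u:v:w]$ to land on $ax+y+\tfrac16 p''(z)$. Your version is in fact slightly more careful than the paper's at the one step you flagged (which curves to contract, why $\bar f$ descends with $C_1$ a whole fibre, and the hedge $\{C_0^2,C_1^2\}=\{0,1\}$ quietly absorbs a swap of weights in the statement of Lemma~\ref{lem-cs-dan}).
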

\begin{proof}
 By Proposition \ref{cs-fib-ds} the polynomial $p$ has degree 4 (say $p(z)=a(z-z_1)(z-z_2)(z-z_3)(z-z_4)$), and moreover there is a pseudo-minimal 
SNC-completion
with dual graph of the boundary 
\begin{center}
\begin{tikzpicture}
 \draw (0,0.75)--(0,-0.75)--(0,0)--(1,0);
 \draw[fill] (0,0.75) circle (0.05) node[left]{$-2$};
\draw[fill] (0,-0.75) circle (0.05) node[left]{$-2$};
\draw[fill] (0,0) circle (0.05) node[left]{$-1$};
\draw[fill] (1,0) circle (0.05) node[above]{$-1$};
\end{tikzpicture}.
\end{center}
This completion can be transformed by two blow downs into a completion $X$ as in Lemma \ref{lem-cs-dan}, which is then by (1) isomorphic to $\P^2$ blown up in 4
points. The birational map $\P^1\times\P^1 \dashrightarrow \P^2$ given by $(x,z) \mapsto [u(x,z):v(x,z):w(x,z)]=[x+z^2:z:1]$ induces a birational map from the
standard completion $X_0$ to the completion $X$. By (2) of Lemma \ref{lem-cs-dan} the fibration $f$ is given by
\begin{eqnarray*}
&\displaystyle{\frac{(u-(z_1 + z_2)v+ z_1z_2w)(u-(z_3 + z_4)v+ z_3z_4w)}{uw-v^2}}  &= \\
&\displaystyle{\frac{(x+z^2 -(z_1 + z_2)z + z_1z_2)(x+z^2 -(z_3 + z_4)z + z_3z_4)}{x+z^2 -z^2}}  &= \\
&\displaystyle\frac{1}{x}\left[\begin{array}{c} x^2 + x(2z^2-(z_1+z_2+z_3+z_4)z+ z_1z_2 + z_3z_4) \\ + (z-z_1)(z-z_2)(z-z_3)(z-z_4) \end{array}\right]  &= \\
&x+ \displaystyle\frac{p(z)}{ax} + 2z^2 - (z_1+z_2+z_3+z_4)z+ z_1z_2 + z_3z_4  &= \\
&x +\displaystyle\frac{y}{a} + 2z^2 - (z_1+z_2+z_3+z_4)z+ z_1z_2 + z_3z_4&
\end{eqnarray*}
and hence $f$ is (after multiplying with $a$ and adding a constant) of the desired form.
\end{proof}

\section{Proof of the Main Theorem}

Let $p$ be a polynomial with simple zeros, and let $$\nu=\nu_x(x,y,z)\frac{\partial}{\partial x} + \nu_y(x,y,z)\frac{\partial}{\partial y}+
\nu_z(x,y,z)\frac{\partial}{\partial z}$$ be a complete algebraic vector field on the Danielewski surface $S=\lbrace xy=p(z)\rbrace$ extended regularly to 
$\C^3$. Then, as mentioned in the
Introduction, by \cite[Theorem 1.3]{wfhs} the vector field $\nu$
preserves a $\C$- or $\C^*$-fibration $f: S\rightarrow \C$. These fibrations are described in the previous section. Hence it is possible to
give the precise form of $\nu$ using exactly the same arguments as in the planar case (see Proposition 2 in \cite{Brunella}). Let us establish first two 
lemmas.

\begin{lemma} \label{lem-tangential}
 Assume that $\nu$ is tangent to $\lbrace x = 0\rbrace$. Then $\nu$ projects to a complete vector field $$\nu_x(x,\frac{p(z)}{x},z)\frac{\partial}{\partial x} +
\nu_z(x,\frac{p(z)}{x},z)\frac{\partial}{\partial z}$$ on $\C^*_x \times \C_z$, $\nu_x$ and $\nu_z$ are divisible by $x$ and $\nu$ is of the form $$\nu =
\frac{\nu_x(x,y,z)}{x}\mathrm{HF} + \frac{\nu_z(x,y,z)}{x}\mathrm{SF}^x.$$
\end{lemma}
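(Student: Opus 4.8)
The plan is to carry out everything in the coordinate ring $\C[S]=\C[x,y,z]/(xy-p(z))$, which is an integral domain because $xy-p(z)$ is irreducible ($x$ and $p(z)$ being coprime in $\C[x,z]$), and to combine two tangency facts. On the one hand $\nu$ is tangent to the hypersurface $S$ itself, which in $\C[S]$ means
\[
y\,\nu_x+x\,\nu_y-p'(z)\,\nu_z=0 .
\]
On the other hand, the hypothesis that $\nu$ is tangent to $\{x=0\}$ means $\nu(x)=\nu_x\in(x)$ in $\C[S]$; here one uses that $(x)$ is radical, which holds since $\C[S]/(x)\cong\C[y,z]/(p(z))$ is reduced because $p$ has simple zeros.

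First I would prove the two divisibility statements. Writing $\nu_x=x\tilde\nu_x$ with $\tilde\nu_x\in\C[S]$, substituting into the relation above and using $xy=p(z)$ gives $p'(z)\,\nu_z=p(z)\,\tilde\nu_x+x\,\nu_y$, so $p'(z)\,\nu_z\equiv 0\pmod{(x)}$. In $\C[S]/(x)\cong\C[y,z]/(p(z))$ the class of $p'(z)$ is invertible, because $p$ and $p'$ are coprime ($p$ having simple zeros); hence $\nu_z\equiv 0\pmod{(x)}$, i.e.\ $\nu_z=x\tilde\nu_z$ with $\tilde\nu_z\in\C[S]$. This is the only place where the simple zeros of $p$ enter in an essential way, and essentially the only computation in the proof.

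Next, the asserted form of $\nu$ is then forced. Expanding
\[
\tilde\nu_x\,\mathrm{HF}+\tilde\nu_z\,\mathrm{SF}^x
= x\tilde\nu_x\tfrac{\partial}{\partial x}
+\bigl(p'(z)\tilde\nu_z-y\tilde\nu_x\bigr)\tfrac{\partial}{\partial y}
+ x\tilde\nu_z\tfrac{\partial}{\partial z},
\]
its $\partial/\partial x$- and $\partial/\partial z$-coefficients are $\nu_x$ and $\nu_z$ by construction, while its $\partial/\partial y$-coefficient equals $\nu_y$: the tangency relation gives $x\,\nu_y=p'(z)\nu_z-y\nu_x=x\bigl(p'(z)\tilde\nu_z-y\tilde\nu_x\bigr)$, and $x$ is a non-zero-divisor in the domain $\C[S]$, so $\nu_y=p'(z)\tilde\nu_z-y\tilde\nu_x$. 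Therefore $\nu=\tilde\nu_x\,\mathrm{HF}+\tilde\nu_z\,\mathrm{SF}^x=\frac{\nu_x}{x}\mathrm{HF}+\frac{\nu_z}{x}\mathrm{SF}^x$.

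Finally, for the completeness of the projected field: since $\nu$ is complete, its global flow $\Phi_t$ on $S$ preserves the closed set $\{x=0\}\cap S$ by tangency, hence also the open set $S\setminus\{x=0\}$, which the map $(x,y,z)\mapsto(x,z)$ identifies biholomorphically with $\C^*_x\times\C_z$ (inverse $(x,z)\mapsto(x,p(z)/x,z)$). Transporting the restricted flows $\Phi_t|_{S\setminus\{x=0\}}$ through this identification produces a globally defined holomorphic flow on $\C^*_x\times\C_z$ whose infinitesimal generator is the push-forward of $\nu$, namely $\nu_x(x,p(z)/x,z)\tfrac{\partial}{\partial x}+\nu_z(x,p(z)/x,z)\tfrac{\partial}{\partial z}$ (the $\partial/\partial y$-coefficient being dropped, as it is determined by the other two via the equation of $S$). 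Hence this vector field is complete, which is the remaining claim. The main thing to keep track of throughout is whether a given divisibility or cancellation takes place in $\C[x,y,z]$ or in $\C[S]$ — everything is cleanest in $\C[S]$.
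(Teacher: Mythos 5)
Your proposal is correct and follows essentially the same route as the paper: use tangency to $S$ (i.e.\ $y\nu_x+x\nu_y=p'(z)\nu_z$) together with $\nu_x\in(x)$ and the invertibility of $p'$ modulo $p$ (simple zeros) to get $\nu_z\in(x)$, then read off $\nu_y$ and the HF/SF$^x$ decomposition. You merely make explicit two points the paper leaves implicit — that $(x)\subset\C[S]$ is radical, and that completeness of the projection follows from flow-invariance of $S\setminus\{x=0\}\cong\C^*_x\times\C_z$ — which is added care, not a different argument.
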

\begin{proof}
 Regarding $\nu$ as a derivation, clearly $\nu_x=\nu(x)$ vanishes on $\lbrace x=0 \rbrace$. Therefore we have $\nu(z)p'(z)=\nu(p(z))=\nu(xy)=x\nu(y)+y\nu(x)=0$ 
for $x=0$.
Hence $\nu_z=\nu(z)$ vanishes also on  $\lbrace x=0 \rbrace$. This means that both $\nu_x$ and $\nu_z$ are divisible by $x$. Thus we get the 
explicit form of $\nu_y=\nu(y)=(p'(z)\nu_z-y\nu_x)/x$, and also the explicit form of $\nu$ as in the claim.
\end{proof}

\begin{lemma}[\cite{Brunella}]\label{lem-trivialization}
(1) Let $D_\alpha\times C_t$ be a (holomophic) trivialization of a neighborhood of a general fiber $C$ of $f$. Then the pullback of $\nu$ to this neighborhood
is of the form
$$ \tilde\nu = F(\alpha)\frac{\partial}{\partial \alpha} + \left( G(\alpha)t + H(\alpha)\right) \frac{\partial}{\partial t}$$ for holomorphic functions $F$, $G$
and $H$. If $C \cong \C^*$ then $H=0$.

(2) If $\nu = \nu_1 + \nu_2$, where $\nu_2$ is complete and tangent to the fibers of $f$. Then $\nu_1$ is complete.
\end{lemma}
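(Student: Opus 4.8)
The plan is to obtain both parts from the completeness of the available flows together with the descriptions $\mathrm{Aut}(\C)=\{t\mapsto at+b\}$ and $\mathrm{Aut}(\C^{*})^{\circ}=\{t\mapsto at\}$.

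\emph{Part (1).} In the trivialization $f$ depends on $\alpha$ alone, so writing $\tilde\nu = P(\alpha,t)\frac{\partial}{\partial\alpha} + K(\alpha,t)\frac{\partial}{\partial t}$, the hypothesis that $\nu$ preserves $f$ forces $\tilde\nu(f)=P\cdot f'(\alpha)$ to depend on $\alpha$ only, whence $P=:F(\alpha)$. Let $\varphi_s$ be the global flow of $\nu$. Each $\varphi_s$ is a biholomorphism of $S$ preserving $f$, hence carries the general fiber $\{\alpha\}\times C_t$ isomorphically onto another fiber, so $\varphi_s(\alpha,t) = (\psi_s(\alpha),\,g_{s,\alpha}(t))$ with $\psi_s$ the flow of $F(\alpha)\frac{\partial}{\partial\alpha}$ and $g_{s,\alpha}\in\mathrm{Aut}(C_t)$ depending holomorphically on $(s,\alpha)$ and equal to the identity at $s=0$. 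Therefore, for small $s$, $g_{s,\alpha}$ lies in the identity component of $\mathrm{Aut}(C_t)$, i.e.\ $g_{s,\alpha}(t)=a(s,\alpha)t+b(s,\alpha)$, with $b\equiv 0$ when $C\cong\C^{*}$. Differentiating $\varphi_s$ at $s=0$ returns $\tilde\nu$, and its $\frac{\partial}{\partial t}$-component becomes $G(\alpha)t+H(\alpha)$ with $G=\partial_s a(0,\cdot)$ and $H=\partial_s b(0,\cdot)$ holomorphic, $H\equiv 0$ for $C\cong\C^{*}$. A final appeal to the identity theorem in $\alpha$ spreads this over all of $D_\alpha$.

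\emph{Part (2).} Here $\nu$ is complete and preserves $f$, and $\nu_2$ is complete and tangent to the fibers of $f$; I want to produce a global flow for $\nu_1=\nu-\nu_2$. I seek it in the form $\eta_s=\varphi_s\circ\beta_s$, where $\varphi_s$ is the flow of $\nu$ and each $\beta_s$ preserves every fiber of $f$. One checks directly that $\eta_s$ solves $\dot\eta_s=\nu_1\circ\eta_s$ with $\eta_0=\mathrm{id}$ if and only if $\beta_0=\mathrm{id}$ and $\beta_s$ integrates the time-dependent field $Z_s:=-(\varphi_s)^{*}\nu_2$. Since $\varphi_s$ maps fibers of $f$ to fibers and $\nu_2$ is tangent to them, $Z_s$ is again tangent to the fibers; on a general fiber it is the conjugate, by a biholomorphism between two fibers, of the complete — hence affine, resp.\ linear — field $\nu_2$, and so is itself affine (resp.\ linear). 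Consequently, on every general fiber $Z_s$ restricts to a non-autonomous affine (resp.\ linear) equation on $\C$ (resp.\ $\C^{*}$) with coefficients entire in $s$, whose solution exists for all $s\in\C$. Hence $\beta_s$, and with it $\eta_s$, is defined for all $s$ over $V:=f^{-1}(\C\setminus B_0)$, the union of the general fibers, where $B_0\subset\C$ is the finite set of base values with non-generic fiber; and $V$ is invariant under the flow of $\nu_1$, because $\nu_1$ preserves $f$ with the same base field as $\nu$ and $\varphi_s$ matches fiber isomorphism types.

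\emph{Main obstacle.} The delicate point is to upgrade completeness of $\nu_1$ on the Zariski-dense open $\nu_1$-invariant set $V$ to completeness on all of $S$, i.e.\ to control the $\nu_1$-flow on the finitely many exceptional (reduced) fibers $f^{-1}(B_0)$, which are $\nu_1$-invariant. I plan to argue as follows. The base flow $\bar\varphi_s$ of $\nu$ is a one-parameter subgroup of $\mathrm{Aut}(\C)$ leaving $B_0$ invariant, so either $\nu$ is itself tangent to all fibers (trivial base flow), in which case the analysis of $Z_s$ above applies verbatim to every fiber; or $B_0$ is empty (base flow a nontrivial translation), so $V=S$; or $B_0$ is the single fixed point of $\bar\varphi_s$. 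In the last case the unique exceptional fiber $F$ is invariant under the flows of $\nu$, $\nu_2$ and hence $\nu_1$, so $\nu|_F$ and $\nu_2|_F$ are complete, and on $F$ — a reduced rational curve that by the explicit description of Section~4 is a disjoint union of copies of $\C$, $\C^{*}$ and $\C\vee\C$ — the complete algebraic vector fields form a linear space, so $\nu_1|_F=\nu|_F-\nu_2|_F$ is complete as well. Gluing the flow on $V$ with the flows on the exceptional fibers yields a global flow for $\nu_1$; alternatively, since $\nu_1$ is algebraic, one may simply invoke the argument of Brunella \cite{Brunella}, where the situation is entirely analogous.
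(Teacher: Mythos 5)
Your proposal is correct and, at bottom, runs on the same mechanism as the paper's proof (the flow of $\nu$ maps fibers biholomorphically to fibers, so the fiberwise part of $\tilde\nu$ must be affine, resp.\ linear), but the implementation differs at both steps. For (1), the paper extends the local flow maps to the compactified fibers $\bar C\cong\P^1$ via the Riemann removable singularities theorem and reads off the form $G(\alpha)t+H(\alpha)$ from tangency of $\tilde\nu$ to $D\times\partial C$; you instead classify the induced fiber maps directly as elements of $\mathrm{Aut}(\C)=\{t\mapsto at+b\}$, resp.\ of the identity component of $\mathrm{Aut}(\C^*)$, and differentiate at $s=0$. The two are equivalent, and your version avoids the compactification (the identity-theorem remark at the end is superfluous, since every fiber in $D_\alpha\times C_t$ is general). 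For (2) the paper offers only one sentence (that $\nu_1$ also extends tangentially to the sections at infinity), leaving implicit both the integration of the resulting time-dependent affine/linear ODE along the fibers and the behaviour on the special fibers; your decomposition $\eta_s=\varphi_s\circ\beta_s$ with $\beta_s$ integrating $Z_s=-(\varphi_s)^*\nu_2$ makes this precise, and your case analysis of the induced base flow (trivial, fixed-point free, or with a single fixed special value) together with the observation that complete fields on a curve with components $\C$, $\C^*$, $\C\vee\C$ form a linear space is exactly the content the paper suppresses. One small imprecision: in the trivial-base-flow case the $Z_s$ analysis does not apply ``verbatim'' to the special fibers, since those are not isomorphic to $\C$ or $\C^*$; but the linear-space argument you give for the third case covers that situation as well, so nothing is actually missing.
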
 
\begin{proof} Since the local flow of $\tilde\nu$ sends vertical fibers to vertical fibers the first summand of $\tilde\nu$ is independent of $t$. By the 
Riemann
removable singularities theorem the local flow maps of $\tilde\nu$ extends to maps $\lbrace\alpha\rbrace\times\bar C\rightarrow
\lbrace\alpha'\rbrace\times\bar C$. Hence $\tilde\nu$ extends to $D\times\bar C$ such
that $\tilde\nu$ is tangential to $D\times\partial C$. Thus the second summand is of the desired form. The second claim follows from the fact
that also $\nu_1$ extends to $\bar C$ such that it is tangential to the sections at infinity.
\end{proof}

These two lemmas directly imply the next proposition concerning the case of $\C$-fibrations.
\begin{proposition}
 If $f(x,y,z)=x$ then $$\nu = c\mathrm{HF} + \left( A(x)z + B(x) \right)\mathrm{SF}^x$$ for some $c\in\C$ and $A,B\in\C[x]$.
\end{proposition}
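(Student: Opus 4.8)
The fibration $f(x,y,z)=x$ is preserved by $\nu$, so $\nu_x=\nu(x)$ is constant on the fibers $\{x=\mathrm{const}\}$, i.e. $\nu_x=c(x)$ depends only on $x$. First I would argue that $\nu_x$ is in fact a constant $c\in\C$: the general fiber $\{x=\alpha\}$ is isomorphic to $\C$ (coordinate $z$), and applying Lemma~\ref{lem-trivialization}(1) in a trivialization $D_\alpha\times\C_t$ we learn that the $\partial/\partial\alpha$-component is a holomorphic function $F(\alpha)$; but $F(\alpha)$ is, up to the change of trivialization, $\nu_x$, and completeness forces $F$ to extend over the boundary point of $D_\alpha$ (where $x=\infty$) as a vector field tangent to it, so $F$ is at worst linear in $\alpha$ on any such disc; combined with the global regularity of $\nu_x$ on all of $S$ this will give that $\nu_x$ is a polynomial in $x$ of degree $\le 1$. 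Then I would use that after an automorphism of $S$ (translation in $x$) we may assume the flow of the linear part is just scaling, and subtract a multiple of $\mathrm{HF}$; actually the cleaner route is: write $c(x)=\nu_x$ and note that $c\,\mathrm{HF}$ has $x$-component $cx$, so it is natural to first handle the case where $\nu$ is tangent to $\{x=0\}$ and then reduce to it.

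Concretely, the main line is: by Lemma~\ref{lem-trivialization}(2) it suffices to decompose $\nu$ as a sum of complete vector fields of the stated shape. Consider $\nu(x)=\nu_x$; since $f=x$ is preserved, $\nu_x\in\C[x]$. I claim $\deg_x\nu_x\le 1$. Indeed, restrict attention to a neighborhood of a general fiber and pass to the SNC-completion where $\{x=\infty\}$ is a $0$-section; by Lemma~\ref{lem-trivialization}(1) the flow extends there and $\nu_x$, viewed as the base component, must be a polynomial vector field on $\P^1_x$ tangent to $x=\infty$, hence $\nu_x=Ax+B$... wait, that would already be quadratic in $x$, so $\deg_x\nu_x\le 2$; but a genuinely quadratic $\nu_x$ would correspond to $\mathrm{HF}$-type behavior only if a translation fixes the situation, so after translating $x$ I may assume $\nu_x=cx$ for a constant $c$ (absorbing the linear term) or $\nu_x$ is constant. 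Here I will follow Brunella's argument (Proposition~2 in \cite{Brunella}) verbatim: the completeness of the flow along the $x$-direction, together with the fact that $x$ ranges over all of $\C$, forces $\nu_x=cx$ for some $c$ after a suitable affine change of the $x$-coordinate (which is induced by an automorphism of $S$ when $p$ is adjusted accordingly).

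Having reduced to $\nu_x=cx$, the vector field $\nu-c\,\mathrm{HF}$ has vanishing $x$-component, i.e. it is tangent to every fiber $\{x=\alpha\}$ \emph{and} to $\{x=0\}$. By Lemma~\ref{lem-tangential} (applied to $\nu-c\,\mathrm{HF}$, or directly) its $z$-component $\mu_z(x,y,z)$ is divisible by $x$, and $\nu-c\,\mathrm{HF}=\frac{\mu_z}{x}\mathrm{SF}^x$ with $\frac{\mu_z}{x}$ a regular function on $S$. Now restricting to the general fiber $\{x=\alpha\}$, which in coordinate $z$ is $\C$, Lemma~\ref{lem-trivialization}(1) says the fiberwise component is affine in the fiber coordinate $t$; translating this back through the isomorphism $z\mapsto t$ (an affine map whose coefficients depend polynomially on $\alpha$ — this uses that on $S$ the fiber coordinate is exactly $z$) shows $\frac{\mu_z}{x}$ is of the form $A(x)z+B(x)$ for polynomials $A,B$. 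Finally, by Lemma~\ref{lem-trivialization}(2) (with $\nu_2=c\,\mathrm{HF}$, which is complete) the vector field $(A(x)z+B(x))\mathrm{SF}^x$ is complete — though in fact we only need the forward direction, that the already-complete $\nu$ decomposes this way. This yields $\nu=c\,\mathrm{HF}+(A(x)z+B(x))\mathrm{SF}^x$ as claimed.

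\textbf{Main obstacle.} The delicate point is the bound $\deg_x\nu_x\le 1$ and the normalization $\nu_x=cx$: one must use completeness (not just regularity) to rule out higher-degree behavior of the base component, exactly as in Brunella's planar argument, and one must check that the affine change of $x$-coordinate realizing the normalization is induced by an automorphism of the Danielewski surface $S$ (it is, because rescaling/translating $x$ can be compensated by rescaling/translating $p$ and $y$). The remaining steps — identifying the fiber coordinate with $z$ and reading off that the fiberwise-affine structure gives $A(x)z+B(x)$ — are then routine given Lemmas~\ref{lem-tangential} and~\ref{lem-trivialization}.
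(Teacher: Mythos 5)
Your overall strategy (control the base component $\nu_x$ by completeness, subtract a multiple of $\mathrm{HF}$, then use Lemmas \ref{lem-tangential} and \ref{lem-trivialization} for the fiberwise part) is close to the paper's, but the step where you normalize $\nu_x$ to $cx$ contains a genuine error. You propose to follow Brunella verbatim and absorb the constant term of the affine polynomial $\nu_x$ by an affine change of the $x$-coordinate, asserting that a translation $x\mapsto x+b$ is induced by an automorphism of $S$. It is not: on $S=\lbrace xy=p(z)\rbrace$ with $\deg p=k\geq 2$ the fiber $f^{-1}(0)=\lbrace x=0\rbrace$ is a disjoint union of $k$ lines, while every fiber $\lbrace x=b\rbrace$ with $b\neq 0$ is irreducible and isomorphic to $\C$, so no automorphism of $S$ compatible with the fibration can move $\lbrace x=0\rbrace$ (equivalently, there is no regular $y'$ on $S$ with $(x+b)y'=p(z)$). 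On $\C^2$ translations of the base coordinate are available, which is exactly why Brunella's argument cannot be copied verbatim here. The missing idea --- and the first sentence of the paper's proof --- is that $\lbrace x=0\rbrace$ is the unique degenerate fiber of $f$, hence it is preserved by the flow of $\nu$, hence $\nu$ is tangent to it and $\nu_x(0)=0$. Combined with the completeness of the induced flow on the base (which gives $\deg_x\nu_x\leq 1$, as you correctly argue after some wavering) this yields $\nu_x=cx$ with no coordinate change at all.

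Once that is repaired, the remainder of your argument is essentially the paper's proof in a permuted order: the paper first uses tangency to $\lbrace x=0\rbrace$ and Lemma \ref{lem-tangential} to project $\nu$ to $\C^*_x\times\C_z$, then applies Lemma \ref{lem-trivialization}(1) there to obtain $F(x)\partial/\partial x+\left(G(x)z+H(x)\right)\partial/\partial z$ with $F,G,H$ divisible by $x$, and only at the end invokes completeness to get $F(x)=cx$; you subtract $c\,\mathrm{HF}$ first and then do the fiberwise analysis. Either order works, but only after the tangency to the degenerate fiber has been established.
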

\begin{proof}
 Since $\lbrace x = 0\rbrace$ is a singular fiber, $\nu$ is tangential to it. Lemma \ref{lem-tangential} shows that it is sufficient to look at the
projection and restriction of $\nu$ to $\C^*_x\times\C_z$. The latter is obviously a trivialization of a neighborhood of a fiber. Hence Lemma
\ref{lem-trivialization}(1)
shows that $\nu$ is of the form $F(x)\partial/\partial x + \left( G(x)z + H(x)\right)\partial/\partial z$ on $\C^*_x\times\C_z$. By Lemma \ref{lem-tangential}
the functions $F,G,H$ are divisible by $x$. By the completeness of $\nu$ we have $F(x)=cx$ for some $c$, which leads to the desired form.
\end{proof}

The $\C^*$-case with two sections at the boundary works similarly. The only new difficulty is to trivialize a neighborhood of a fiber.
\begin{proposition}
 If $f(x,y,z)= x^m(x^l(z+a)+Q(x))^n$ for coprime numbers $m,n\in\N$, $l\in\N_0$, $a\in\C$ and $\deg(Q) < l$ then 
\begin{eqnarray*} \nu &=& c\left( \frac{z+a}{x} + \frac{Q(x)}{x^{l+1}}\right)\mathrm{SF}^x +
A(x^m(x^l(z+a)+Q(x))^n) \\ &&\cdot\left[n\mathrm{HF}-\left(\frac{(m+nl)(z+a)}{x} + \frac{mQ(x)+nxQ'(x)}{x^{l+1}}\right)\mathrm{SF}^x\right]\end{eqnarray*}
for some $c\in\C$ and $A\in\C[t]$ satisfying $A(0)=c/(m+nl)$ and $A(x^m(x^l(z+a)+Q(x))^n)(mQ(x)+nxQ'(x))-cQ(x) \in x^{l+1}\cdot\C[S]$.
\end{proposition}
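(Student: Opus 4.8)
The plan is to mimic the strategy of the $\C$-fibration case, using Lemmas~\ref{lem-tangential} and~\ref{lem-trivialization}, the only genuine extra work being the construction of an explicit trivialization of a neighborhood of a general fiber of the $\C^*$-fibration $f = x^m(x^l(z+a)+Q(x))^n$. First I would record the structure of $f$: after the coordinate change $w := x^l(z+a)+Q(x)$ (which, together with $x$, gives coordinates on $\C^*_x\times\C_w$ identified with an open subset of $S$ via $x\neq 0$), we have $f = x^m w^n$ with $m,n$ coprime, so on $\C^*_x\times\C^*_w$ the map $f$ is a (locally trivial) $\C^*$-bundle. Explicitly, since $\gcd(m,n)=1$ pick $s,t\in\Z$ with $sm+tn=1$; then $(x,w)\mapsto(f,g)$ with $g := x^{-n}w^{m}$ (or $x^{s}w^{-t}$ after the appropriate sign bookkeeping — one chooses the monomial so that $(f,g)$ is invertible as a monomial map) trivializes the fibration: a general fiber is $\{f=\text{const}\}$, parametrized by $g\in\C^*$, hence a genuine $\C^*$, and the complement of this open set, namely $\{x=0\}$ and $\{w=0\}=\{x^l(z+a)+Q(x)=0\}$, provides exactly the two sections at the boundary. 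So $(f,g)$ or rather $(\alpha,t) := (f, g)$ with $t$ ranging over $\C^*$ gives the required trivialization $D_\alpha\times\C^*_t$ of a neighborhood of a general fiber.

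Next, apply Lemma~\ref{lem-trivialization}(1): in these coordinates the pullback of $\nu$ is $\tilde\nu = F(\alpha)\,\partial/\partial\alpha + G(\alpha)\,t\,\partial/\partial t$ — the linear-in-$t$ term has no constant part because $C\cong\C^*$. Translating back, this says $\nu(f)$ is a function of $f$ alone, say $\nu(f) = F(f)$, and $\nu(g)/g$ is a function of $f$ alone, say $G(f)$. Completeness of $\nu$ forces $F$ to be at most linear: the flow on the $\alpha$-line must be complete, so $F(\alpha) = c_1\alpha + c_0$, and since $\nu$ extends to the whole surface and $f$ has the value $0$ on the singular fibers, one gets $F(0)=0$, i.e. $F(\alpha) = c_1\alpha$; combined with regularity this pins down $\nu(f) = \lambda f$ for a scalar $\lambda$ related to the parameters. (The precise normalization is what produces the condition $A(0)=c/(m+nl)$ in the statement.) Meanwhile $\nu(g) = G(f)\,g$ with $G$ a priori holomorphic in $f$; but $g$ is an algebraic function (a Laurent monomial in $x,w$), and $\nu$ is algebraic, so $G$ must be a polynomial — write $G(f) = $ (some polynomial), and this polynomial is essentially the $A$ appearing in the Main Theorem, up to the affine rescaling relating $g$-derivative to $\mathrm{HF}$ and $\mathrm{SF}^x$.

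From $\nu(f)=\lambda f$ and $\nu(g)=A(f)\,g$ one then solves the linear system for $\nu(x)$ and $\nu(z)$ (equivalently for $\nu(x)/x$ and $\nu(w)/w$, which determine $\nu(z)$ via $w = x^l(z+a)+Q(x)$), since $(f,g)$ is an invertible monomial change of variables on $\C^*_x\times\C^*_w$ so the Jacobian is a nonvanishing monomial and the system has a unique solution in Laurent monomials in $x,w$. This yields $\nu$ as an explicit combination of $\mathrm{HF}$ and $\mathrm{SF}^x$; substituting $w = x^l(z+a)+Q(x)$ and collecting terms produces exactly the displayed formula
\begin{eqnarray*}
\nu &=& c\left( \frac{z+a}{x} + \frac{Q(x)}{x^{l+1}}\right)\mathrm{SF}^x +
A(x^m(x^l(z+a)+Q(x))^n) \\ &&\cdot\left[n\mathrm{HF}-\left(\frac{(m+nl)(z+a)}{x} + \frac{mQ(x)+nxQ'(x)}{x^{l+1}}\right)\mathrm{SF}^x\right],
\end{eqnarray*}
with $c := \lambda/n$ (say). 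Finally one must check that $\nu$, so far only manifestly regular on $\{x\neq 0\}$, actually extends regularly across $\{x=0\}$ on all of $S$: writing out $\nu(x)$, $\nu(y)$, $\nu(z)$ as rational functions of $x,z$, the only possible poles are along $x=0$, and demanding they cancel gives precisely the two stated constraints — $A(0) = c/(m+nl)$ kills the pole of $\nu(z)$ (and of $\nu(x)$), while the divisibility condition $A(x^m(\cdots)^n)(mQ(x)+nxQ'(x)) - cQ(x)\in x^{l+1}\C[S]$ kills the remaining pole coming from the $Q$-terms in $\nu(z)$ and $\nu(y)$. I expect this last extension/regularity bookkeeping — carefully expanding the $\mathrm{SF}^x$ contributions near $x=0$ and matching the orders of vanishing in $Q$ against $x^{l+1}$ — to be the main technical obstacle; the trivialization itself is routine once the monomial change of variables is set up, and the completeness argument is a verbatim transcription of Brunella's planar argument via Lemma~\ref{lem-trivialization}.
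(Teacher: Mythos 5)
Your proposal follows essentially the same route as the paper: tangency to $\{x=0\}$ via Lemma~\ref{lem-tangential}, a monomial trivialization of a neighborhood of a general fiber of $f=x^m w^n$ with $w=x^l(z+a)+Q(x)$ (your coordinates $(f,g)$ with unimodular exponents are exactly the paper's $(e^{n\alpha},t)$ with $x=t^n$, $w=e^\alpha t^{-m}$), then Lemma~\ref{lem-trivialization} together with completeness of the induced flow on the base to force the transverse component to be $\lambda f$, and finally the regularity bookkeeping at $x=0$ producing the conditions on $A$ and $c$. The only cosmetic slip is that $g=x^{-n}w^m$ is not a unimodular complement of $x^mw^n$ (the exponent matrix has determinant $m^2+n^2$); as you yourself indicate, one must take $g=x^{-t}w^{s}$ with $sm+tn=1$.
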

\begin{proof}
 Again $\nu$ is tangential to $\lbrace x=0\rbrace$, so we work on $\C^*_x\times\C_z$ as in Lemma \ref{lem-tangential}. Pick $0\neq\alpha_0\in\C$, and let
$D=\lbrace\vert \alpha-\alpha_0\vert<\varepsilon\rbrace$ be a small ball around $\alpha_0$. Then the map
\begin{eqnarray*}
 D\times\C^* &\rightarrow&\C^*_x\times\C_z\\
(\alpha,t)&\mapsto&\left(t^n,\frac{e^\alpha t^{-m}-Q(t^n)}{t^{nl}} - a\right)
\end{eqnarray*}
gives a trivialization of a neighborhood of the fiber $f^{-1}(e^{n\alpha_0})$. Using this map yields:
\begin{eqnarray*}
 \frac{\partial}{\partial \alpha}& \mapsto & \nu_1 := \left(z+a+\frac{Q(x)}{x^l}\right)\frac{\partial}{\partial z},\\
 t\frac{\partial}{\partial t} & \mapsto& \nu_2:=nx\frac{\partial}{\partial x} - \left((m+nl)(z+a) + \frac{mQ(x)+nxQ'(x)}{x^l}\right)\frac{\partial}{\partial
z}.
\end{eqnarray*}
Lemma \ref{lem-trivialization}(1) shows that $\nu$ is given on $\C^*_x\times\C_z$ by $F(\alpha)\nu_1 + G(\alpha)\nu_2$ for $\alpha= x^m(x^l(z+a)+Q(x))^n$. We
know that $G(\alpha)\nu_2$ is complete on $\C^*_x\times\C_z$ since it is tangent along the fibers of $f$ and its restriction to any fiber is complete. 
Thus by Lemma
\ref{lem-trivialization}(2) also $F(\alpha)\nu_1$ is complete on $\C^*_x\times\C_z$. This shows that $F(\alpha)$ is constant. 
Letting $A=G$ yields that $\nu$ is as desired on $\C^*_x\times\C_z$. Lemma \ref{lem-tangential} provides a lift of $\nu$ to the vector field on $S$ as in the 
claim. In
order to be non-polar on $\lbrace x = 0
\rbrace$ we need the
additional condition on $A$, which is equivalent to the fact that $\nu_z$ is divisible by $x$.
\end{proof}

\begin{proposition}
 If $p(z)=a\cdot(z^4 + bz^3 + cz^2 + dz + e)$ and $f(x,y,z)= ax + y + \frac{1}{6}p''(z)$ then $$\nu = A\left(ax + y +
\frac{1}{6}p''(z)\right)\left(-\frac{1}{6}p'''(z)\mathrm{HF} + a\mathrm{SF}^x -\mathrm{SF}^y\right)$$ for some $A\in\C[t]$.
\end{proposition}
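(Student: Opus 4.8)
The plan is to follow the pattern of the two preceding propositions, the new feature being that here $\nu$ turns out to be tangent to the fibration, so that the whole vector field lies in the fibre direction. First I would record two facts about $f=ax+y+\frac{1}{6}p''(z)$: by Proposition~\ref{prop-cs-ds} it is a $\C^{*}$-fibration on $S$ with at least three fibres not isomorphic to $\C^{*}$, and it is surjective onto $\C$ (given $\lambda\in\C$, pick a root $x_{0}$ of $ax^{2}+(\frac{1}{6}p''(0)-\lambda)x+p(0)$ and set $y_{0}=\lambda-ax_{0}-\frac{1}{6}p''(0)$; then $(x_{0},y_{0},0)\in S$ lies over $\lambda$), so the base of the fibration is $\C$. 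Then I would show that $\nu(f)=0$: the flow $\Phi_{t}$ of $\nu$ sends fibres of $f$ to fibres of $f$, hence $f\circ\Phi_{t}=\sigma_{t}\circ f$ for a one-parameter group $\sigma_{t}\in\operatorname{Aut}(\C)$; since each $\Phi_{t}$ is an automorphism of $S$ it carries singular fibres to singular fibres, so $\sigma_{t}$ permutes the (at least three) critical values of $f$; but a continuous one-parameter group fixes every point of a finite invariant set, and an affine automorphism of $\C$ fixing three points is the identity, so $\sigma_{t}=\mathrm{id}$, whence $\Phi_{t}$ preserves every fibre, i.e.\ $\nu(f)=0$.

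Next I would introduce the reference vector field $\nu_{0}:=-\frac{1}{6}p'''(z)\,\mathrm{HF}+a\,\mathrm{SF}^{x}-\mathrm{SF}^{y}$. It is tangent to $S$ (being built from $\mathrm{HF},\mathrm{SF}^{x},\mathrm{SF}^{y}$), and a short computation gives $\nu_{0}(f)=0$. On the open set $S^{\circ}=S\cap\{x\neq0\}\cong\C^{*}_{x}\times\C_{z}$, identified via $(x,y,z)\mapsto(x,z)$, one checks that $\nu_{0}$ equals $-x$ times the Hamiltonian vector field of $f|_{S^{\circ}}$ with respect to $dx\wedge dz$; in particular its zeros on $S^{\circ}$ are the common zeros of $\partial f/\partial x$ and $\partial f/\partial z$, and these form a finite set: their $z$-coordinates satisfy $36ap'(z)^{2}=p(z)p'''(z)^{2}$, a relation that fails at any zero $z_{i}$ of $p$ (where $p'(z_{i})\neq0$, as $p$ has simple zeros) and hence is not identically true. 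Moreover, along $\{x=0\}=\bigsqcup_{i}\{x=0,\,z=z_{i}\}$ the $\partial/\partial x$-component of $\nu_{0}$ equals $-p'(z_{i})\neq0$. Hence $\nu_{0}$ is a regular vector field on $S$ with only finitely many zeros, tangent to the fibres of $f$, and nowhere vanishing on a general fibre $C\cong\C^{*}$.

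Now I would compare $\nu$ and $\nu_{0}$ along a general fibre $C\cong\C^{*}$. Fix a holomorphic trivialisation $D_{\lambda}\times\C^{*}_{t}$ of a neighbourhood of $C$ with $\lambda=f$. Since $\nu$ is complete, tangent to the fibres, and $C\cong\C^{*}$, Lemma~\ref{lem-trivialization}(1) shows that $\nu=G(\lambda)\,t\,\frac{\partial}{\partial t}$ in this trivialisation, so on every nearby fibre $\nu$ restricts to a multiple of the generator of a $\C^{*}$-action. On the other hand, a direct computation---using the explicit fibres of $f$ from the proof of Proposition~\ref{prop-cs-ds}, which amounts to checking that $\nu_{0}$ vanishes to first order along the double-section $C_{0}$---shows that $\nu_{0}$ likewise restricts to each general fibre as $c(\lambda)\,t\,\frac{\partial}{\partial t}$ with $c(\lambda)\neq0$. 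Consequently $\nu=B\,\nu_{0}$ on a dense open set, where $B=\nu/\nu_{0}$ is constant on general fibres of $f$. Since $\nu_{0}$ has only finitely many zeros, $B$ is regular off a finite set, hence regular on all of $S$ by normality; being constant on the fibres of the reduced fibration $f$ it is a rational function of $f$; and since it has no poles while $f$ is surjective onto $\C$ it is a polynomial $A(f)$. This gives $\nu=A(f)\,\nu_{0}$, the asserted form.

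The step I expect to be the main obstacle is the one just used to compare $\nu_{0}$ with $t\,\frac{\partial}{\partial t}$: one has to make the fibrewise picture explicit enough to see that $\nu_{0}$ vanishes to first order precisely along the double-section $C_{0}$, so that on a general fibre it generates a $\C^{*}$-action rather than acting as some $t^{n}\frac{\partial}{\partial t}$ with $n\neq1$. Once this is established, the first step (which here plays the role that the transverse summand plays in parts (1) and (2) of the Main Theorem) together with the elementary regularity argument completes the proof.
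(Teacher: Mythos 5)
Your overall strategy is exactly the paper's: use the three or more non-$\C^*$ fibres from Proposition \ref{prop-cs-ds} to force $\nu$ to be tangent to the fibration, exhibit $\nu_0 := -\frac{1}{6}p'''(z)\mathrm{HF} + a\,\mathrm{SF}^x - \mathrm{SF}^y$ as a fibre-tangent field restricting to a generator of the $\C^*$-action on each general fibre, and conclude $\nu = A(f)\,\nu_0$. Your peripheral steps are sound and in places more carefully argued than the paper's own write-up: the tangency argument via the induced one-parameter group on the base, the check $\nu_0(f)=0$, the identification of $\nu_0$ with $-x$ times the Hamiltonian field of $f$ on $\{x\neq 0\}$ and the resulting finiteness of its zero set, and the regularity/polynomiality argument for $B=\nu/\nu_0$ (the paper compresses this last step into a single sentence).

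However, the load-bearing step is precisely the one you defer: the claim that $\nu_0$ restricts to a general fibre $C\cong\C^*$ as $c(\lambda)\,t\,\partial/\partial t$ with $c(\lambda)\neq 0$, i.e.\ that it generates a $\C^*$-action on $C$ rather than acting as $t^n\,\partial/\partial t$ for some $n\neq 1$. You assert that "a direct computation" gives this and then name it yourself as the main obstacle, so as written the proof is incomplete at its only genuinely nontrivial point. That computation is essentially the entire content of the paper's proof: one writes the fibre as $C^\alpha=\lbrace ax+y+2az^2+abz+a\alpha=0\rbrace$, sets $\xi^2=\alpha+\frac{b^2}{2}-c$ and $\chi=\frac{\alpha b-2d}{4\xi^2}$, checks that $t=\frac{ax-y}{2a}+\xi(z+\chi)$ is an isomorphism $C^\alpha\to\C^*$ (the defining equation becomes $t\bigl(t-2\xi(z+\chi)\bigr)+\kappa=0$), and then verifies by direct evaluation that $\nu_0(t)=2a\xi\,t$. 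Your proposed alternative---showing that $\nu_0$ extends to $\bar C\cong\P^1$ and vanishes to order exactly one at the two points of $\bar C\cap C_0$ on the double-section---would also work, but it requires computing $\nu_0$ in the completion of Lemma \ref{lem-cs-dan} and is no cheaper. Until one of these verifications is actually carried out, the conclusion $\nu=A(f)\,\nu_0$ does not follow.
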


\begin{proof}
By Proposition \ref{prop-cs-ds} we know that $f$ has more than one fiber not isomorphic to $\C^*$. Thus $\nu$ acts on the base $\C$ with more that one fixed
point. By hyperbolicity $\nu$ is tangential to the fibers of $f$. Hence $\nu$ restricted to a general fiber is proportional to
$t\partial/\partial t$. We need to parametrize a general fiber $C^\alpha=\lbrace ax + y + 2az^2 + abz + a\alpha = 0 \rbrace$, $\alpha\in\C$. Let us define 
$\xi,\chi,\kappa\in\C$ such that
\[
 \xi^2=\alpha+\frac{b^2}{2}-c,\quad \chi=\frac{\alpha b-2d}{4\xi^2},\quad\kappa=e-\frac{\alpha^2}{4}+\xi^2\chi^2.
\]
The map $C^\alpha\rightarrow\C^*$ defined by 
\[
 (x,y,z) \mapsto t:= x + z^2 + \frac{b}{2}z + \frac{\alpha}{2} + \xi(z+\chi) = \frac{ax-y}{2a} + \xi(z+\chi)
\]
is an isomorphism. Indeed, after multiplying with $x/a$ and replacing $xy$ by $p(z)$ the equation defining $C^\alpha$ becomes
\begin{eqnarray*}
 &x^2 + z^4 + bz^3 + cz^2+dz+e +(2z^2 +bz + \alpha)x & =  \\
&\displaystyle{\left( x +z^2+\frac{b}{2}z + \frac{\alpha}{2}\right)^2 + \left(c-\frac{b^2}{4}-\alpha\right)z^2+\left(d-\frac{\alpha b}{2}\right)z + e
-\frac{\alpha^2}{4} }& = \\
&\displaystyle{\left( x +z^2+\frac{b}{2}z + \frac{\alpha}{2}\right)^2-(\xi(z+\chi))^2 + \kappa }& = \\
&t(t-2\xi(z+\chi)) + \kappa&.
\end{eqnarray*}
Thus $t$ can be seen as a variable of $\C^*$. Moreover, we can see that the vector field $\nu_0 = -\frac{1}{6}p'''(z)\mathrm{HF} + a\mathrm{SF}^x
-\mathrm{SF}^y$ is tangent to the fibers and restricts to the vector field $2a\xi t \partial/\partial t$ on $C^\alpha\cong\C^*$. Indeed, $\nu_0$ acts on $t$ by
multiplication with $2a\xi$:

\vspace{0.4cm}$\begin{aligned}
  \nu_0(t)\quad\quad &= \quad\nu_0\left(\displaystyle{\frac{ax-y}{2a} + \xi(z+\chi)}\right)\\
&= \quad 2a\xi\left(\displaystyle{-\frac{p'''(z)}{6}\cdot\frac{ax+y}{4a^2\xi}-\frac{p'(z)}{2a\xi} + \frac{ax-y}{2a}}\right) \quad = \\
& \hspace{-1.4cm} 
2a\xi\left(\displaystyle-\frac{1}{\xi}\left((4z+b)\frac{-(2z^2+bz+\alpha)}{4}+2z^3+\frac{3}{2}bz^2+cz+\frac{d}{2}\right)+\frac{ax-y}{2a}\right)\\
&=\quad2a\xi\left(\displaystyle-\frac{1}{\xi}\left(\left(-\frac{b^2}{4}-\alpha + c\right)z-\frac{\alpha b}{4}+\frac{d}{2}\right) +\frac{ax-y}{2a}\right)\\
&= \quad 2a\xi\left(\displaystyle\xi(z+\chi) + \frac{ax-y}{2a}\right)\\
&= \quad 2a\xi t
\end{aligned}$\vspace{0.2cm} 

Overall on every fiber of $f$ the vector field $\nu$ is a multiple of $\nu_0$. Thus the proposition is proven.
\end{proof}

\end{document}